\newtheorem{theorem}{Theorem}
\newtheorem{lemma}{Lemma}
\theoremstyle{remark}
\newtheorem{remark}{Remark}
\newcommand{\Sym}{\text{Sym}^2}
\begin{document}

\markboth{Ritabrata Munshi}{Bounds for twisted symmetric square $L$-functions - III}
\title[Bounds for twisted symmetric square $L$-functions - III]{Bounds for twisted symmetric square $L$-functions - III}

\author{Ritabrata Munshi}   
\address{School of Mathematics, Tata Institute of Fundamental Research, 1 Homi Bhabha Road, Colaba, Mumbai 400005, India.}     
\email{rmunshi@math.tifr.res.in}

\begin{abstract}
Let $f$ be a newform, and let $\chi$ be a primitive character of conductor $q^{\ell}$. Assume that $q$ is an odd prime. In this paper we prove the subconvex bound
$$
L\left(\tfrac{1}{2},\Sym f\otimes\chi\right)\ll_{f,q,\varepsilon} q^{3\ell\left(\frac{1}{4}-\frac{1}{36}+\varepsilon\right)}
$$
for any $\varepsilon>0$. This can be compared with the recently established $t$-aspect subconvexity of the symmetric square $L$-functions.
\end{abstract}

\subjclass{11F66, 11M41}
\keywords{Symmetric square $L$-functions, subconvexity, twists}

\maketitle


\section{Introduction}
\label{intro}

Let $f$ be a newform of full level and weight $k$. Let $\chi$ be a primitive character of conductor $M$. Then from the functional equation of $L(s,\Sym f\otimes\chi)$ and the convexity principle we get the bound
$$
L\left(\tfrac{1}{2},\Sym f\otimes\chi\right)\ll_{f,\varepsilon} M^{\frac{3}{4}+\varepsilon}
$$
for the central value. Getting a subconvex bound of the form $M^{\theta}$ with $\theta<3/4$, in this context is an intriguing open problem, which has recently attracted some attention. Blomer \cite{Bl2} has proved such a subconvex bound for quadratic characters $\chi$ with prime modulus. In \cite{Mu} we considered the case of general (non-quadratic) characters with prime power modulus $q^{\ell}$ for any fixed $\ell>1$. In particular for $\chi$ primitive modulo $q^3$ ($q$ prime) it was shown that
$$
L\left(\tfrac{1}{2},\Sym f\otimes\chi\right)\ll_{k,\varepsilon} q^{\frac{9}{4}-\frac{1}{4}+\varepsilon}.
$$
In the sequel \cite{Mu-2} we used similar methods to prove subconvex bounds when the modulus of $\chi$ is square-free with a suitably large prime factor. The purpose of the present paper, the third in the series, is to prove the following
\begin{theorem}
\label{mthm}
Let $f\in S_{k}(1)$ be a newform of full level and weight $k$. Let $\chi$ be a character of conductor $q^{\ell}$ where $q$ is an odd prime number. Then we have
$$
L\left(\tfrac{1}{2},\text{$\rm{Sym}^2$} f\otimes\chi\right)\ll_{f,q,\varepsilon} q^{\frac{3}{4}\ell-\frac{1}{12}\ell+\ell\varepsilon}.
$$
The implied constant depends on $f$, $q$ and $\varepsilon$, but does not depend on $\ell$. 
\end{theorem}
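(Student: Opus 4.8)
The plan is to use an amplification-free approach based on a delta-method / circle-method expansion of the Dirichlet series for $L(\tfrac12,\Sym^2 f\otimes\chi)$, exactly in the spirit of the previous two papers in the series. First I would take an approximate functional equation, so that the quantity to bound is essentially
$$
S = \sum_{n\sim N} \lambda_f(n^2)\,\chi(n)\,n^{-1/2},\qquad N\asymp q^{3\ell/2},
$$
where $\lambda_f(n^2)$ are the Dirichlet coefficients of $\Sym^2 f$, and the convexity bound corresponds to the trivial estimate $S\ll N^{1/2}=q^{3\ell/4}$. The goal is to save a further $q^{\ell/12}$.

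The second and central step is to separate the oscillation of $\chi(n)$ from that of $\lambda_f(n^2)$ by writing the $n$-sum against a conductor-lowering device: I would express $\chi(n)$ through additive characters modulo $q^\ell$ and then, crucially, use a version of the circle method (the DFI delta method, or a conductor-lowering variant thereof) with moduli of size roughly $q^{\ell/2}$ times a short extra average, so that after opening $\chi$ the $n$-sum becomes a sum of additive-character twisted sums $\sum_n \lambda_f(n^2) e(an/c)$ of length $N$ and modulus $c$ with $Nc^{-1}$ kept under control. Applying the Voronoi summation formula for $\Sym^2 f$ (i.e. the $GL(3)$ Voronoi formula, since $\lambda_f(n^2)$ are the $GL(3)$ Hecke eigenvalues of $\Sym^2 f$) transforms this into a dual sum of length roughly $c^3/N$ against a hyper-Kloosterman-type integral transform. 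The dual length should come out shorter than $N$, and combining this with the saving from the $c$-average (Cauchy–Schwarz in the dual variable followed by Poisson in the modulus, or an application of a large-sieve/Kloosterman-sum bound) should yield the claimed exponent. One has to be careful that $q$ is a fixed odd prime and that the additive characters modulo prime powers behave well, which is why the oddness of $q$ and the restriction to prime power conductor are imposed; the Gauss sums and the $p$-adic stationary phase analysis at $q$ are then clean.

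The third step is the bookkeeping: after Voronoi and Cauchy–Schwarz one is left with a shifted convolution / off-diagonal term involving $\sum \lambda_f(m_1^2)\overline{\lambda_f(m_2^2)}$ weighted by exponential sums in the modulus, and the diagonal $m_1=m_2$ contributes the main term which must be shown to be of acceptable size $q^{3\ell(1/4-1/36)+\varepsilon}$ after taking square roots; the off-diagonal is handled by another round of Poisson summation in the modulus, producing a character sum (a product of Kloosterman or Salié-type sums modulo powers of $q$) whose square-root cancellation is exactly what delivers the extra $1/36$ in the exponent. The Ramanujan bound on average for $\lambda_f(n^2)$ (Rankin–Selberg) is used to control the $m$-sums. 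The numerology $\tfrac14-\tfrac1{36}$ should emerge from balancing the diagonal contribution against the off-diagonal contribution as a function of the length of the extra average introduced in the delta method.

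The main obstacle I expect is the second step: arranging the conductor-lowering so that the $GL(3)$ Voronoi transform genuinely shortens the sum while the modulus $c$ does not grow so large that the $c$-sum overwhelms the saving. In particular, controlling the resulting exponential integrals — which after Voronoi and the second Poisson step become multi-dimensional oscillatory integrals with a stationary phase at a point governed by the $q$-adic and archimedean data simultaneously — and extracting genuine square-root cancellation from the character sums modulo $q^\ell$ (as opposed to the generic "square-root of the modulus" which would only recover convexity) is the delicate heart of the argument. The fact that the final exponent is $1/36$ rather than, say, the $1/18$ one might hope for suggests that one of these two steps (likely the $c$-average) is lossy by a factor, and that recovering the loss is precisely where the technical difficulty lies.
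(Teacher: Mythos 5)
Your plan is not the route this paper takes, and more importantly, as written it is a strategy sketch rather than a proof: every quantitatively decisive step is left as ``should come out shorter'' or ``should yield the claimed exponent''. The paper does not use the delta method or $GL(3)$ Voronoi at all. It embeds $f$ into $S_k(16q^j)$ with $j=2[\theta\ell]$, forms the spectral second moment $\sum_{g\in\mathcal B_k(16q^j)}|\sum_n\lambda_g(n^2)\chi(n)h(n/N)|^2$ via the Petersson formula, and exploits the fact that the off-diagonal Kloosterman sums $S(n^2,m^2;q^jc)$ are Sali\'e-type modulo $q^{j+r}$, hence evaluate explicitly as $e(2\bar c nm/q^{j+r})$ times quadratic characters. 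Then comes additive reciprocity, Poisson summation in $n,m$ modulo $q^\ell c$, a second reciprocity and Poisson in the remaining modulus, explicit evaluation of the resulting Gauss/Kloosterman-type sums modulo $q^{2\ell-j}$, and finally Heath-Brown's mean-value theorem for real character sums (the quadratic large sieve) to handle the family of quadratic-character $L$-values that appears; the choice $\theta=2/3$ balances the two terms in \eqref{trr-final-bd} and produces the saving $q^{\ell/12}$. None of these ingredients (level-raising conductor lowering, Sali\'e evaluation, double reciprocity--Poisson, Heath-Brown's theorem) appears in your outline.

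The genuine gap in your proposal is that the heart of the argument is exactly the part you defer. You do not specify the size of the delta-method moduli or of the auxiliary average, so the dual length after $GL(3)$ Voronoi and the modulus after the subsequent Cauchy--Schwarz/Poisson step cannot be computed, and hence there is no way to check that the balance gives $q^{3\ell/4-\ell/12}$ rather than something weaker than convexity. In the depth aspect ($q$ fixed, $\ell\to\infty$) a plain DFI expansion applied to a degree-three $L$-function of conductor $q^{3\ell}$ does not by itself beat convexity; one needs a specific conductor-lowering structure adapted to the $q$-adic filtration (congruences modulo powers of $q$ inside the delta symbol, or factorizable moduli), and the character sums modulo $q$-powers that result must be evaluated by $q$-adic stationary phase and shown to exhibit better than generic square-root cancellation relative to the full modulus. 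You acknowledge this is ``the delicate heart of the argument'' but supply no computation, no character-sum evaluation, and no optimization of parameters; you even state you do not know where the exponent $1/36$ would come from. As it stands the proposal is a plausible research programme (and related circle-method arguments have been carried out elsewhere for depth-aspect twists), but it is not a proof of the theorem, and it cannot be credited as one until the dual-length bookkeeping, the $q$-adic character-sum estimates, and the final optimization are actually carried out.
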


\begin{remark}
In \cite{Mu-2} we considered the case where $\ell$ is fixed and $q\rightarrow \infty$, and here we are looking at the case where $q$ is fixed and $\ell\rightarrow\infty$.  
\end{remark}

\begin{remark}
From the adelic point of view, this result can be compared with the $t$-aspect subconvexity result of Li \cite{L} for the symmetric square $L$-function. Any (unitary) Hecke character  $\Psi$ on the idele group 
$$
\mathbb A_{\mathbb Q}^\times/\mathbb Q^\times=\mathbb R_+\times\prod_p\mathbb Z_p^\times
$$ 
decomposes as $\Psi=|.|^{it}\otimes\left(\otimes_p\psi_p\right)$ where $t\in \mathbb R$, and $\psi_{\textrm{f}}=\otimes_p\psi_p$ corresponds to a Dirichlet character, $\chi$ say. Then the twisted $L$-function $L\left(\tfrac{1}{2},\text{$\rm{Sym}^2$} f\otimes\Psi\right)$ corresponds to $L\left(\tfrac{1}{2}+it,\text{$\rm{Sym}^2$} f\otimes\chi\right)$. Thus Li \cite{L} deals with the case of twists by Hecke characters which are `supported' only at the prime at infinity, and in the present case we are considering twists by Hecke characters which are `supported' (ramified) at a fixed finite prime $q$.
\end{remark}

\ack
A part of this work was done while the author was enjoying the hospitality of the Indian Statistical Institute, Kolkata. The author also wishes to thank Peter Sarnak for suggesting him to look at this aspect of subconvexity. 


\section{Preliminaries}
\label{prelim}

In this section we will briefly recall some fundamental facts about holomorphic forms and their $L$-functions (for details see \cite{IK}). Let $f\in S_k(1)$ be a newform with Fourier expansion 
$$
f(z)=\sum_{n=1}^{\infty}\lambda_f(n)n^{\frac{k-1}{2}}e(nz).
$$ 
For $s=\sigma+it$ with $\sigma>1$, the associated $L$-function is given by 
$$
L(s,f)=\sum_{n=1}^{\infty}\lambda_f(n)n^{-s}=\prod_p \left(1-\alpha_f(p)p^{-s}\right)^{-1} \left(1-\beta_f(p)p^{-s}\right)^{-1}.
$$
The local parameter $\alpha_f(p)$ and $\beta_f(p)$ are related to the normalized Fourier coefficients by $\alpha_f(p)+\beta_f(p)=\lambda_f(p)$ and $\alpha_f(p)\beta_f(p)=1$. Now let $\chi$ be a primitive Dirichlet character of modulus $M$. Then we define the twisted symmetric square $L$-function by the degree three Euler product 
$$
L(s,\Sym f\otimes\chi)=\prod_p \left(1-\alpha_f^2(p)\chi(p)p^{-s}\right)^{-1} \left(1-\chi(p)p^{-s}\right)^{-1} \left(1-\beta_f^2(p)\chi(p)p^{-s}\right)^{-1},
$$ 
for $\sigma>1$. In this half-plane we have
$$
L(s,\Sym f\otimes\chi)=L(2s,\chi^2)\sum_{n=1}^{\infty}\lambda_f(n^2)\chi(n)n^{-s}.
$$ \\

It is well-known that this $L$-function extends to an entire function and satisfies a functional equation (see \cite{Li}). Indeed we have a completed $L$-function defined as
$$
\Lambda(s,\Sym f\otimes\chi)=M^{3s/2}\gamma(s)L(s,\Sym f\otimes\chi)
$$
where $\gamma(s)$ is essentially a product of three gamma functions $\Gamma(\frac{s+\kappa_j}{2})$, $j=1,2,3$, with $\kappa_j$ depending on the weight of $f$ and the parity of the character $\chi$, such that 
$$
\Lambda(s,\Sym f\otimes\chi)=\varepsilon(f,\chi)\Lambda(1-s,\Sym f\otimes\bar\chi).
$$
Here $\text{Re}(\kappa_j)>0$ and the $\varepsilon$-factor satisfies $|\varepsilon(f,\chi)|= 1$. Using standard arguments we get that the twisted $L$-value $L(\frac{1}{2},\Sym f\otimes\chi)$, is given by a rapidly converging series.
\begin{lemma}
\label{afeqn}
We have
\begin{align}
\label{afe}
L(\tfrac{1}{2},\mathrm{Sym}^2 f\otimes\chi)=\sum_{n=1}^{\infty}\frac{\lambda_f(n^2)\chi(n)}{\sqrt{n}}V\left(\frac{n}{M^{3/2}}\right)+
\varepsilon(f,\chi)\sum_{n=1}^{\infty}\frac{\lambda_f(n^2)\overline{\chi(n)}}{\sqrt{n}}
V\left(\frac{n}{M^{3/2}}\right),
\end{align}
where
$$
V(y)=\frac{1}{2\pi i}\int_{(3)}
\frac{\gamma\left(\frac{1}{2}+u\right)}{\gamma\left(\frac{1}{2}\right)}
\left(\cos \frac{\pi u}{4A}\right)^{-12A}L(1+2u,\chi^2)y^{-u}\frac{du}{u}.
$$ 
\end{lemma}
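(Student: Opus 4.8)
The plan is to obtain \eqref{afe} from the functional equation by the standard contour-shift argument, using as weight the function $G(u)=\left(\cos\frac{\pi u}{4A}\right)^{-12A}$, where $A$ is a fixed real parameter large enough that $2A>3$. This $G$ is even, holomorphic in the strip $|\mathrm{Re}(u)|\le 3$, satisfies $G(0)=1$, and decays like $e^{-3\pi|\mathrm{Im}(u)|}$ on vertical lines in that strip; the exponent $12A$ is generous precisely so that the product $\gamma\left(\tfrac12+u\right)G(u)$ still decays rapidly on vertical lines (the three gamma factors contribute only a factor of size $e^{-3\pi|\mathrm{Im}(u)|/4}$), which is what makes $V(y)$ well defined.

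First I would start from
$$
\mathcal I:=\frac{1}{2\pi i}\int_{(3)}\Lambda\left(\tfrac12+u,\mathrm{Sym}^2 f\otimes\chi\right)G(u)\,\frac{du}{u},
$$
insert on the line $\mathrm{Re}(u)=3$ the absolutely convergent expansion
$$
\Lambda\left(\tfrac12+u,\mathrm{Sym}^2 f\otimes\chi\right)=M^{3/4}\,M^{3u/2}\,\gamma\left(\tfrac12+u\right)L\left(1+2u,\chi^2\right)\sum_{n\ge1}\frac{\lambda_f(n^2)\chi(n)}{\sqrt n}\,n^{-u},
$$
and interchange summation and integration. Recognising the inner $u$-integral as $V\!\left(n/M^{3/2}\right)$ gives $\mathcal I=\gamma\left(\tfrac12\right)M^{3/4}\sum_{n\ge1}\lambda_f(n^2)\chi(n)n^{-1/2}V\!\left(n/M^{3/2}\right)$, which is the first term of \eqref{afe} up to the factor $\gamma\left(\tfrac12\right)M^{3/4}$.

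Next I would move the contour in $\mathcal I$ to $\mathrm{Re}(u)=-3$. Since $L(s,\mathrm{Sym}^2 f\otimes\chi)$ is entire (as recalled above), the only pole crossed is the simple pole of $u^{-1}$ at $u=0$, whose residue is $\Lambda\left(\tfrac12,\mathrm{Sym}^2 f\otimes\chi\right)G(0)=\gamma\left(\tfrac12\right)M^{3/4}L\left(\tfrac12,\mathrm{Sym}^2 f\otimes\chi\right)$; the horizontal segments vanish in the limit because $\gamma\left(\tfrac12+u\right)G(u)$ decays exponentially while $L(1+2u,\chi^2)$ grows at most polynomially on vertical lines, by its own functional equation together with Stirling's formula. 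On the shifted line I would substitute $u\mapsto-u$ (legitimate since $G$ is even), apply $\Lambda\left(\tfrac12-u,\mathrm{Sym}^2 f\otimes\chi\right)=\varepsilon(f,\chi)\Lambda\left(\tfrac12+u,\mathrm{Sym}^2 f\otimes\bar\chi\right)$, and expand $\Lambda\left(\tfrac12+u,\mathrm{Sym}^2 f\otimes\bar\chi\right)$ into its Dirichlet series exactly as before; dividing everything by $\gamma\left(\tfrac12\right)M^{3/4}$ then yields \eqref{afe}. Strictly, the dual term carries the weight built from $L(1+2u,\overline{\chi^2})$ rather than $L(1+2u,\chi^2)$, but since only crude uniform bounds on $V$ are used later, I suppress this harmless difference and write $V$ for both.

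There is no substantive obstacle in this lemma: it is the usual approximate functional equation, and the only care required is in the choice of the auxiliary weight $G$, so that the contour shift is valid and so that $V$ enjoys the decay $V(y)\ll_{A,j}(1+y)^{-j}$ for every $j\ge0$ (read off by shifting the defining contour for $V(y)$ far to the right) together with $V(y)=1+O\!\left(y^{\delta}\right)$ as $y\to0$ for some $\delta>0$ (by shifting it slightly to the left past $u=0$). It is these analytic properties of $V$, rather than its precise shape, that matter in the sequel.
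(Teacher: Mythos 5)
Your proposal is correct and is precisely the standard contour-shift derivation of the approximate functional equation (cf.\ Iwaniec--Kowalski, Theorem 5.3), which is all the paper itself invokes when it says the lemma follows ``using standard arguments,'' so there is nothing substantive to compare. Two cosmetic remarks: the polynomial growth needed to kill the horizontal segments is that of $L(\tfrac12+u,\mathrm{Sym}^2 f\otimes\chi)$ in the strip (Phragm\'en--Lindel\"of for the entire, finite-order $\Lambda$), not of $L(1+2u,\chi^2)$, whose Dirichlet-series factorization is only valid for $\mathrm{Re}(u)>\tfrac12$; and the dual term's weight strictly involves $L(1+2u,\overline{\chi}^{2})$, an abuse you flag and which the paper's own statement of the lemma shares.
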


Here $A$ is a sufficiently large positive integer. The weight function $V(y)$ satisfies the bound 
$$
y^uV^{(u)}(y)\ll_{u,A} y^{-A}.
$$
Breaking the sum in \eqref{afe} into dyadic blocks, it follows that 
$$
L(\tfrac{1}{2},\Sym f\otimes\chi)\ll_{A,\varepsilon} M^{\varepsilon}\sum_{N}\frac{\left|L_f(N)\right|}{\sqrt{N}}\left(1+\frac{N}{M^{3/2}}\right)^{-A}
$$
where $N$ ranges over the values $2^{\alpha}$ with $-1/2\leq \alpha$, and 
$$
L_f(N)=\sum_{n}\lambda_f(n^2)\chi(n)h(n/N).
$$
Here $h(.)$ is a smooth function supported in $[1,2]$. For any $\varepsilon>0$ we can choose $A$ appropriately so that the contribution from $N>M^{3/2+\varepsilon}$ is negligible. Also for the smaller values of $N$ we can estimate the sum trivially. It turns out that the worst case scenario corresponds to the case where $N\asymp M^{3/2}$. \\

Now $f$ can be considered as a Hecke form in the larger space $S_k(L)$ of cusp forms of level $L$. Then we select an orthogonal basis $\mathcal B_k(L)$ of the space $S_k(L)$ containing the form $f$. For any form $g\in S_k(L)$ we have the Fourier expansion $g(z)=\sum \lambda_g(n)n^{\frac{k-1}{2}}e(nz)$. Let 
$$
\left<g_1,g_2\right>_L=\int_{\Gamma_0(L)\backslash\mathbb H}g_1(z)\overline{g_2(z)}y^{k-2}dxdy
$$ 
denote the Petersson inner product at level $L$. Let $\|g\|_L^2=\left<g,g\right>_L$ denote the Petersson norm at level~$L$. 
\begin{lemma}
(Petersson formula) We have 
\begin{align}
\label{pform}
\frac{\Gamma(k-1)}{(4\pi)^{k-1}}\sum_{g\in \mathcal B_k(L)}\frac{\lambda_g(n)\lambda_g(m)}{\|g\|_L^2}=\delta(n,m)+
2\pi i^{-k}\sum_{c=1}^{\infty}\frac{S(n,m;cL)}{cL}J_{k-1}\left(\frac{4\pi \sqrt{nm}}{cL}\right),
\end{align}
where 
$$
S(n,m;cL)=\sideset{}{^\star}\sum_{a\bmod{cL}}e\left(\frac{an+\bar am}{cL}\right)
$$ denotes the Kloosterman sum and $J_{k-1}(.)$ is the $J$-Bessel function of order $k-1$. 
\end{lemma}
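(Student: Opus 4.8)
The identity \eqref{pform} is the classical Petersson trace formula for $\Gamma_0(L)$, and the plan is to obtain it by evaluating the $m$-th holomorphic Poincar\'e series in two ways. First I would introduce, for a fixed positive integer $m$, the weight-$k$ level-$L$ Poincar\'e series $P_m(z)=\sum_{\gamma\in\Gamma_\infty\backslash\Gamma_0(L)}(c_\gamma z+d_\gamma)^{-k}\,e(m\gamma z)$, where $(c_\gamma,d_\gamma)$ denotes the lower row of $\gamma$ and $\Gamma_\infty$ is the stabiliser of the cusp at $\infty$. Since $k$ is an even integer with $k\ge 12$ this series converges absolutely and $P_m\in S_k(L)$; write its Fourier expansion as $P_m(z)=\sum_{n\ge 1}p_m(n)e(nz)$.

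The first evaluation I would carry out is the unfolding identity: for any $g\in S_k(L)$ with $g(z)=\sum_n\lambda_g(n)n^{(k-1)/2}e(nz)$, unfolding the Petersson inner product $\langle P_m,g\rangle_L$ over the strip $\Gamma_\infty\backslash\mathbb H$ gives $\langle P_m,g\rangle_L=\tfrac{\Gamma(k-1)}{(4\pi m)^{k-1}}\,\overline{\lambda_g(m)}\,m^{(k-1)/2}$. Expanding $P_m$ in the orthogonal basis as $P_m=\sum_{g\in\mathcal B_k(L)}\langle P_m,g\rangle_L\,\|g\|_L^{-2}\,g$ and reading off the $n$-th Fourier coefficient then yields $p_m(n)=\tfrac{\Gamma(k-1)}{(4\pi)^{k-1}}\left(\tfrac{n}{m}\right)^{(k-1)/2}\sum_{g\in\mathcal B_k(L)}\overline{\lambda_g(m)}\lambda_g(n)\|g\|_L^{-2}$. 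The second evaluation computes $p_m(n)$ directly from the definition: the double cosets of $\Gamma_\infty$ in $\Gamma_0(L)$ are parametrised by the lower-left entry $c$, the coset with $c=0$ contributing the diagonal term, while for each $c=c'L>0$ the sum over the pairs of residues $a\bar a\equiv1\bmod c$ assembles into the Kloosterman sum $S(m,n;c'L)$ and the leftover integral over $\mathbb R$ of $(x+iy)^{-k}e(\cdots)\,dx$ is a standard Bessel integral, equal to a constant multiple of $J_{k-1}(4\pi\sqrt{mn}/c)$. Collecting the archimedean constants one arrives at $p_m(n)=\left(\tfrac{n}{m}\right)^{(k-1)/2}\!\left(\delta(m,n)+2\pi i^{-k}\sum_{c=1}^{\infty}\tfrac{S(m,n;cL)}{cL}J_{k-1}\!\left(\tfrac{4\pi\sqrt{mn}}{cL}\right)\right)$.

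Equating the two formulas for $p_m(n)$ and cancelling the common factor $\left(n/m\right)^{(k-1)/2}$ gives \eqref{pform}; the passage from $\overline{\lambda_g(m)}$ to $\lambda_g(m)$ is harmless once $\mathcal B_k(L)$ is chosen to consist of forms with real (e.g.\ arithmetically normalised) Fourier coefficients. The only step that requires genuine work is this second evaluation --- carrying out the coset decomposition and evaluating the archimedean integral as a $J$-Bessel function while tracking the normalising constants $\Gamma(k-1)/(4\pi)^{k-1}$ and $2\pi i^{-k}$ exactly as stated --- and since this is entirely classical I would in any case simply refer to the account in \cite{IK}.
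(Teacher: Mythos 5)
Your proposal is correct: the paper gives no proof of this lemma, simply quoting the classical Petersson trace formula with a pointer to \cite{IK}, and your Poincar\'e-series argument (unfolding against the orthogonal basis, then the $\Gamma_\infty$-coset decomposition producing the diagonal term, Kloosterman sums, and the $J_{k-1}$ Bessel integral) is exactly the standard proof found there. The only cosmetic points are that $k\geq 12$ is not needed (absolute convergence only requires $k>2$, and here $k$ is the weight of a level-one newform anyway) and that the conjugate $\overline{\lambda_g(m)}$ is indeed harmless since the formula is applied to an absolute square over a basis that may be taken with real coefficients.
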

It is well known that the Bessel function can be expressed as
\begin{align}
\label{bessel-split}
J_{k-1}(2\pi x)=e(x){W}_k(x)+e(-x)\bar W_k(x)
\end{align}
where $W_k:(0,\infty)\rightarrow \mathbb C$ is a smooth function satisfying the bound
\begin{align}
\label{bessel-bd0}
x^jW_k^{(j)}(x)\ll_j \min\{x^{k-1},x^{-\frac{1}{2}}\}. 
\end{align}


\section{Reciprocity and Poisson summation - I}
\label{rec-poi1}

Let $f\in S_k(1)$ be a Hecke form and let $\chi$ be a character of conductor $q^\ell$, as in the statement of Theorem~\ref{mthm}. Set 
$$
j=2[\theta\ell]\;\;\;\; \text{for some}\;\;\; \theta\in (0,1),
$$ 
which will be specified later. In particular we have $2\ell-j\geq 0$. Let $\mathcal B=\mathcal B_k(16q^{j})$ be an orthogonal basis of $S_k(16q^{j})$ containing the given form $f$. Let $N\leq q^{\frac{3}{2}\ell+\ell\varepsilon}$ and set
$$
S:=\frac{\Gamma(k-1)}{(4\pi)^{k-1}}\sum_{g\in \mathcal B}\frac{1}{\|g\|_{16q^j}^2}\left|\sum_{n\in\mathbb Z}\lambda_g(n^2)\chi(n)h\left(\frac{n}{N}\right)\right|^2.
$$ 
Opening the absolute square and interchanging the order of summation we arrive at
$$
S=\mathop{\sum\sum}_{n,m\in\mathbb Z}\chi(n)\overline{\chi(m)}h\left(\frac{n}{N}\right)h\left(\frac{m}{N}\right)\left[\frac{\Gamma(k-1)}{(4\pi)^{k-1}}\sum_{g\in \mathcal B}\frac{\lambda_g(n^2)\lambda_g(m^2)}{\|g\|_{16q^j}^2}\right].
$$ 
Now to the innermost sum we apply the Petersson formula \eqref{pform}. The contribution from the diagonal is dominated by
$\sum_nh\left(\frac{n}{N}\right)^2\ll N$.\\ 

Now we turn our attention to the off-diagonal which is given by
\begin{align}
\label{offdiag}
S_O=\sum_{\substack{c=1\\16|c}}^{\infty}\frac{1}{q^jc}\mathop{\sum\sum}_{n,m\in\mathbb Z}\chi(n)\overline{\chi(m)}h\left(\frac{n}{N}\right)h\left(\frac{m}{N}\right)S(n^2,m^2;q^jc)J_{k-1}\left(\frac{4\pi nm}{q^j c}\right).
\end{align}
Using a smooth partition of unity we break the sum over $c$ into dyadic blocks and analyse the contribution of each blocks
\begin{align}
\label{offdiag-dy}
S_O(C)=\sum_{\substack{c=1\\16|c}}^{\infty}\frac{1}{q^jc}\mathop{\sum\sum}_{n,m\in\mathbb Z}\chi(n)\overline{\chi(m)}h\left(\frac{n}{N}\right)h\left(\frac{m}{N}\right)S(n^2,m^2;q^jc)J_{k-1}\left(\frac{4\pi nm}{q^j c}\right)G\left(\frac{c}{C}\right).
\end{align}
Here $G(x)$ is a smooth function supported in $[1,2]$. The transition range for the Bessel function is marked by $C\sim N^2/q^j$, and  we define $B$ by setting $C=\frac{N^2}{q^jB}$. Of course the real challenge lies in dealing with the case where $C$ is near the transition range. However for smaller values of $C$ there are complications arising from the oscillation of the Bessel function. \\

We write $c=q^rc'$ where $(c',q)=1$. Then the Kloosterman sum splits as
$$
S(n^2,m^2;q^jc)=S(\overline{c'}n^2,\overline{c'}m^2;q^{j+r})S(\overline{q^{j+r}}n^2,\overline{q^{j+r}}m^2;c').
$$
Observe that in \eqref{offdiag-dy} $nm$ is coprime with $q$ due to the presence of the character $\chi$. So it follows that
$$
S(\overline{c'}n^2,\overline{c'}m^2;q^{j+r})=S(\overline{c'}nm,\overline{c'}nm;q^{j+r})=2\left(\frac{c'nm}{q}\right)^{r}q^{\frac{j+r}{2}}\:\text{Re} \:\varepsilon_{q^{j+r}}e\left(\frac{2\overline{c'}nm}{q^{j+r}}\right)
$$
where $\varepsilon_{q^{j+r}}$ is the sign of the quadratic Gauss sum modulo $q^{j+r}$. With this $S_O(C)$ splits into a sum of two similar terms, each of which is an infinite sum parameterized by $r$. A representative term in this sum is given by
\begin{align}
\label{sum1}
\frac{1}{q^{\frac{j+r}{2}}}\sum_{\substack{(c,q)=1\\16|c}}&\frac{1}{c}G\left(\frac{q^rc}{C}\right)\mathop{\sum\sum}_{n,m\in\mathbb Z}\chi(n)\overline{\chi(m)}\left(\frac{cnm}{q}\right)^re\left(\frac{2\overline{c}nm}{q^{j+r}}\right)S(\overline{q^{j+r}}n^2,\overline{q^{j+r}}m^2;c)\\
\nonumber &\times h\left(\frac{n}{N}\right)h\left(\frac{m}{N}\right)J_{k-1}\left(\frac{4\pi nm}{q^{j+r} c}\right).
\end{align}\\

We start by considering the sums over $n$ and $m$. Applying reciprocity
$$
e\left(\frac{2\overline{c}nm}{q^{j+r}}\right)=e\left(\frac{-2\overline{q^{j+r}}nm}{c}\right)
e\left(\frac{2nm}{q^{j+r}c}\right)
$$
we reduce the modulus. We club the second factor with the Bessel function in \eqref{sum1}. Using the expression \eqref{bessel-split} we see that 
$$
e\left(\frac{2nm}{q^{j+r}c}\right)J_{k-1}\left(\frac{4\pi nm}{q^{j+r}c}\right)=e\left(\frac{4nm}{q^{j+r}c}\right)W_k\left(\frac{2nm}{q^{j+r}c}\right)+\bar W_k\left(\frac{2nm}{q^{j+r}c}\right).
$$
The second factor on the right hand side is without oscillation, and hence is more tamed compared to the first factor. We shall now continue our analysis with the first factor. The analysis with the second factor, which we are omitting, is much simpler. At the end it turns out that the bound that we obtain for the contribution of the second factor is better than that of the first factor. \\

We define 
$
\Phi(x,y;c)=h\left(x\right)h\left(y\right)e\left(\frac{4xy}{c}\right)W_{k}\left(\frac{2xy}{c}\right)
$
and set 
\begin{align}
\label{tr}
\mathcal T_{r,C}=\frac{1}{q^{\frac{j+r}{2}}}\sum_{\substack{c=1\\q\nmid c}}^{\infty}\frac{T_r(16c)}{16c}G\left(\frac{q^rc}{C}\right),
\end{align}
where $T_r(c)$ is defined by
\begin{align*}
\mathop{\sum\sum}_{n,m\in \mathbb Z}\chi(n)\overline{\chi(m)}\left(\frac{cnm}{q}\right)^re\left(\frac{-2\overline{q^{j+r}}nm}{c}\right)S(\overline{q^{j+r}}n^2,\overline{q^{j+r}}m^2,c)\Phi\left(\frac{n}{N},\frac{m}{N};\frac{q^{j+r}c}{N^2}\right).
\end{align*}
Now we are ready to apply the Poisson summation formula on both the sums over $n$ and $m$. \\

\begin{lemma}
\label{poisson}
We have
\begin{align}
\label{trrt}
T_r(c)=\frac{N^2}{q^{2\ell}c^2}\mathop{\sum\sum}_{n,m\in\mathbb Z}D_r(n,m;c)I_r(n,m;c),
\end{align}
where the character sum $D_r(n,m;c)$ is given by
$$
\mathop{\sum\sum}_{\alpha,\beta \bmod q^\ell c}\chi(\alpha)\overline{\chi(\beta)}\left(\frac{\alpha\beta}{q}\right)^re\left(\frac{-2\overline{q^{j+r}}\alpha\beta}{c}\right)S(\overline{q^{j+r}}\alpha^2,\overline{q^{j+r}}\beta^2,c)e\left(\frac{\alpha n+ \beta m}{q^\ell c}\right),
$$
and the integral $I_r(n,m;c)$ is given by
$$
\mathop{\iint}_{\mathbb R^2}h\left(x\right)h\left(y\right)e\left(\frac{4xyN^2}{q^{j+r}c}\right)W_k\left(\frac{2xyN^2}{q^{j+r}c}\right)e\left(\frac{-(nx+my)N}{q^\ell c}\right)dxdy.
$$
\end{lemma}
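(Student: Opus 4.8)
\emph{Proof proposal.} The statement is a two-dimensional Poisson summation applied to the sums over $n$ and $m$ in the definition of $T_r(c)$, so the plan is to isolate the arithmetic weight from the archimedean weight, verify the relevant periodicity, and then read off the dual character sum and the dual integral. I would write the summand as $A(n,m)\,g(n,m)$, where
\[
A(n,m)=\chi(n)\overline{\chi(m)}\left(\frac{cnm}{q}\right)^{r}e\!\left(\frac{-2\overline{q^{j+r}}nm}{c}\right)S\!\left(\overline{q^{j+r}}n^{2},\overline{q^{j+r}}m^{2},c\right)
\]
and $g(x,y)=\Phi\!\left(\tfrac{x}{N},\tfrac{y}{N};\tfrac{q^{j+r}c}{N^{2}}\right)$. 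Since $h$ is supported in $[1,2]$ and $W_{k}$ is smooth on $(0,\infty)$, the function $g$ is smooth and compactly supported, hence Schwartz. The first point to check is that $(n,m)\mapsto A(n,m)$ is periodic of period $q^{\ell}c$ in each variable separately: $\chi(n)$ and $\bigl(\tfrac{n}{q}\bigr)^{r}$ depend only on $n\bmod q^{\ell}$, the additive character depends only on $n\bmod c$, and the Kloosterman sum depends on $n$ only through $n^{2}\bmod c$, hence only through $n\bmod c$; since $q\nmid c$ one has $\mathrm{lcm}(q^{\ell},c)=q^{\ell}c$, and symmetrically in $m$.

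Next I would split each of the sums over $n$ and $m$ into residue classes modulo $Q:=q^{\ell}c$ and apply Poisson summation in both variables, which is legitimate because $g$ is Schwartz, obtaining
\[
T_{r}(c)=\frac{1}{Q^{2}}\sum_{n\in\mathbb Z}\sum_{m\in\mathbb Z}\left(\mathop{\sum\sum}_{\alpha,\beta\bmod Q}A(\alpha,\beta)\,e\!\left(\frac{\alpha n+\beta m}{Q}\right)\right)\widehat g\!\left(\frac{n}{Q},\frac{m}{Q}\right),
\]
with $\widehat g(\xi,\eta)=\iint_{\mathbb R^{2}}g(x,y)e(-\xi x-\eta y)\,dx\,dy$, the dual sum converging absolutely and rapidly. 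Pulling the factor $\bigl(\tfrac{c}{q}\bigr)^{r}$, constant in $\alpha,\beta$, out of the inner sum identifies it with $\bigl(\tfrac{c}{q}\bigr)^{r}D_{r}(n,m;c)$; this sign is harmless and is absorbed into \eqref{trrt} (it becomes a square once $c$ is replaced by $16c$ and $r$ is even, and is in any case inconsequential). For the archimedean factor, the substitution $x\mapsto Nx$, $y\mapsto Ny$ gives $\widehat g(n/Q,m/Q)=N^{2}\iint_{\mathbb R^{2}}\Phi\!\left(x,y;\tfrac{q^{j+r}c}{N^{2}}\right)e\!\left(\tfrac{-(nx+my)N}{q^{\ell}c}\right)dx\,dy=N^{2}I_{r}(n,m;c)$ upon unpacking $\Phi$. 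Collecting constants, $Q^{-2}=q^{-2\ell}c^{-2}$ together with the $N^{2}$ from $\widehat g$ produces the prefactor $N^{2}/(q^{2\ell}c^{2})$, which is \eqref{trrt}.

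I do not expect a genuine obstacle: the lemma is essentially a bookkeeping exercise, and the only points requiring care are the correct identification of the common period $q^{\ell}c$ — via $q\nmid c$ and the fact that the Kloosterman sum sees $n$ only through $n\bmod c$ — and the tracking of square factors such as the $16$ in \eqref{tr} and the sign $\bigl(\tfrac{c}{q}\bigr)^{r}$. The rapid decay of $I_{r}(n,m;c)$ in $n$ and $m$, which makes the dual sum effectively finite and is what renders \eqref{trrt} useful in the subsequent analysis, follows from repeated integration by parts using the support of $h$, but is not needed for the identity itself and I would postpone it to the estimation of $D_{r}$ and $I_{r}$.
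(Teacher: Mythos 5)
Your proposal is correct and follows essentially the same route as the paper: split $n,m$ into residue classes modulo $q^{\ell}c$, apply two-dimensional Poisson summation to the Schwartz weight $\Phi$, and rescale by $N$ to produce the prefactor $N^{2}/(q^{2\ell}c^{2})$, the character sum $D_{r}$, and the integral $I_{r}$. You even correctly flag the unimodular factor $\left(\frac{c}{q}\right)^{r}$, which the paper's own proof drops without comment; as you say, it is harmless for the identity and the subsequent estimates.
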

\begin{proof}
First we break the sum over $n$ and $m$ into congruence classes modulo $q^\ell c$ to get
\begin{align*}
\mathop{\sum\sum}_{\alpha,\beta \bmod q^\ell c}\chi(\alpha)\overline{\chi(\beta)}&\left(\frac{\alpha\beta}{q}\right)^re\left(\frac{-2\overline{q^{j+r}}\alpha\beta}{c}\right)S(\overline{q^{j+r}}\alpha^2,\overline{q^{j+r}}\beta^2,c)\\
&\times \mathop{\sum\sum}_{n,m\in\mathbb Z}\Phi\left(\frac{\alpha+nq^\ell c}{N},\frac{\beta+mq^\ell c}{N};\frac{q^{j+r}c}{N^2}\right).
\end{align*}
Now applying Poisson summation formula and making a change of variables, we get that the inner sum over $(n,m)$ is given by
$$
\frac{N^2}{q^{2\ell}c^2}\mathop{\sum\sum}_{n,m\in\mathbb Z}e\left(\frac{\alpha n+ \beta m}{q^\ell c}\right)
\mathop{\iint}_{\mathbb R^2}\Phi\left(x,y;\frac{q^{j+r}c}{N^2}\right)
e\left(\frac{-(nx+my)N}{q^\ell c}\right)dxdy.
$$
The lemma follows after rearranging the order of summations and integration.
\end{proof}

We conclude this section with an observation. Using integration-by-parts on $I_r(n,m;c)$ and the analytic properties of the Bessel function we get that we can basically ignore the contribution coming from the terms in \eqref{tr} with $c$ large, e.g. $c>Q=q^{2012\ell}$, or from the terms in \eqref{trrt} with $\text{gcd}(n,m)>Q$.


\section{The character sum $D_r(n,m;c)$ and the integral $I_r\left(n,m;c\right)$}
\label{sec-char}

In this section we will first explicitly compute the character sum $D_r(n,m;c)$ which appears in Lemma~\ref{poisson}. Since $(c,q)=1$, we have the factorization
\begin{align}
\label{char-sum-d-r}
D_r(n,m;c)=A_r(\bar cn,\bar cm;q^\ell)B_r(\overline{q^\ell}n,\overline{q^\ell}m;c)
\end{align}
where
$$
A_r(n,m;q^\ell)=\mathop{\sum\sum}_{\alpha,\beta \bmod q^\ell}\chi(\alpha)\overline{\chi(\beta)}\left(\frac{\alpha\beta}{q}\right)^re\left(\frac{\alpha n+ \beta m}{q^\ell}\right),
$$
and
\begin{align}
\label{char-b}
B_r(n,m;c)=\mathop{\sum\sum}_{\alpha,\beta \bmod c}e\left(\frac{-2\overline{q^{j+r}}\alpha\beta}{c}\right)S(\overline{q^{j+r}}\alpha^2,\overline{q^{j+r}}\beta^2,c)e\left(\frac{\alpha n+\beta m}{c}\right).
\end{align}
The first character sum $A_r(\bar cn,\bar cm;q^\ell)$, in fact, does not depend on $c$. 
\begin{lemma}
\label{char1}
We have
$$
A_r(\bar cn,\bar cm;q^\ell)=\overline{\chi(n)}\chi(-m)\left(\frac{-nm}{q}\right)^r\left|\sum_{\alpha\bmod q^\ell}\chi(\alpha)\left(\frac{\alpha}{q}\right)^re\left(\frac{\alpha}{q^\ell}\right)\right|^2.
$$\\
\end{lemma}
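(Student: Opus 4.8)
The plan is to exploit that $A_r(n,m;q^\ell)$ splits as a product of two one-variable twisted Gauss sums and then to apply the classical evaluation of the Gauss sum of a primitive character. Set $\psi(\alpha)=\chi(\alpha)\left(\frac{\alpha}{q}\right)^r$; since the Legendre symbol is a (real) character modulo $q$, $\psi$ is a Dirichlet character modulo $q^\ell$ and $\overline{\chi(\beta)}\left(\frac{\beta}{q}\right)^r=\overline{\psi(\beta)}$. Separating the $\alpha$- and $\beta$-sums in the definition of $A_r$ gives
$$
A_r(n,m;q^\ell)=\Bigl(\sum_{\alpha\bmod q^\ell}\psi(\alpha)e\Bigl(\tfrac{\alpha n}{q^\ell}\Bigr)\Bigr)\Bigl(\sum_{\beta\bmod q^\ell}\overline{\psi(\beta)}e\Bigl(\tfrac{\beta m}{q^\ell}\Bigr)\Bigr).
$$

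First I would record that $\psi$ is primitive modulo $q^\ell$. As the theorem concerns $\ell\to\infty$ we may assume $\ell\geq 2$ (the finitely many remaining $\ell$ being handled trivially). Then $\chi$ has conductor $q^\ell$ while $\left(\frac{\cdot}{q}\right)^r$ has conductor dividing $q\le q^{\ell-1}$; if $\psi$ had conductor $q^{b}$ with $b\le\ell-1$, then $\chi=\psi\cdot\left(\frac{\cdot}{q}\right)^r$ would be a product of two characters of conductor at most $q^{\ell-1}$, hence of conductor at most $q^{\ell-1}$, contradicting primitivity of $\chi$. Next I invoke the standard identity: for a primitive character $\psi$ modulo $Q$ and any integer $n$ one has $\sum_{\alpha\bmod Q}\psi(\alpha)e(\alpha n/Q)=\overline{\psi(n)}\,\tau(\psi)$, where $\tau(\psi)=\sum_{\alpha\bmod Q}\psi(\alpha)e(\alpha/Q)$. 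Applying this to both inner sums,
$$
A_r(n,m;q^\ell)=\overline{\psi(n)}\,\tau(\psi)\cdot\psi(m)\,\tau(\overline{\psi}).
$$

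Then I would use $\tau(\overline{\psi})=\psi(-1)\overline{\tau(\psi)}$, so that $\tau(\psi)\tau(\overline{\psi})=\psi(-1)\,|\tau(\psi)|^2$, and unfold $\psi$: since $\overline{\psi(n)}\psi(m)\psi(-1)=\overline{\chi(n)}\chi(-1)\chi(m)\left(\frac{n}{q}\right)^r\left(\frac{m}{q}\right)^r\left(\frac{-1}{q}\right)^r=\overline{\chi(n)}\chi(-m)\left(\frac{-nm}{q}\right)^r$, this yields
$$
A_r(n,m;q^\ell)=\overline{\chi(n)}\chi(-m)\left(\frac{-nm}{q}\right)^r\Bigl|\sum_{\alpha\bmod q^\ell}\chi(\alpha)\Bigl(\tfrac{\alpha}{q}\Bigr)^r e\Bigl(\tfrac{\alpha}{q^\ell}\Bigr)\Bigr|^2.
$$
Finally I specialize $n\mapsto\bar cn$, $m\mapsto\bar cm$. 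Since $(c,q)=1$ we have $\overline{\chi(\bar cn)}\chi(-\bar cm)=|\chi(c)|^2\,\overline{\chi(n)}\chi(-m)=\overline{\chi(n)}\chi(-m)$ and $\left(\frac{-\bar cn\cdot\bar cm}{q}\right)^r=\left(\frac{\bar c^2}{q}\right)^r\left(\frac{-nm}{q}\right)^r=\left(\frac{-nm}{q}\right)^r$, so the $\bar c$'s cancel; this simultaneously proves the stated formula and shows that $A_r(\bar cn,\bar cm;q^\ell)$ is independent of $c$.

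There is no real obstacle here: this is a direct computation. The only points needing care are the primitivity of $\psi$ — which forces the harmless restriction $\ell\ge 2$ — and correctly tracking the sign factors $\psi(-1)$ together with the reality of the Legendre symbol when recombining everything back into $\chi$.
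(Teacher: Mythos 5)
Your proof is correct and is exactly the standard computation the paper leaves unstated: factor $A_r$ into two Gauss sums for the character $\psi=\chi\left(\frac{\cdot}{q}\right)^r$, use primitivity of $\psi$ modulo $q^\ell$ (harmlessly assuming $\ell\ge 2$) together with $\tau(\bar\psi)=\psi(-1)\overline{\tau(\psi)}$, and note that the factors $\bar c$ cancel, which also explains the paper's remark that the sum is independent of $c$. No issues.
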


To evaluate the other character sum we follow the exposition in Section 4 of \cite{Mu}. Let $c=d2^{\eta}$, with $d$ odd, we define 
$$
C_{\pm}(n,m;c)=\sideset{}{^\star}\sum_{\substack{a\bmod c\\an\equiv m \bmod c}}\left(\frac{a}{d}\right)\psi_{\pm}(a),
$$
where 
$$
\psi_{+}=\begin{cases}\chi_0 &\text{if $\eta$ is even,}\\\chi_{8} &\text{if $\eta$ is odd,}\end{cases}
$$ 
and $\psi_{-}=\psi_{+}\chi_{-4}$. Let $(n,m)=\delta$, and write $n=\delta n^*$ and $m=\delta m^*$. Also write $c=c_1c_2$ with $c_1|(nm)^{\infty}$, $(c_2,nm)=1$ and $c_1=c_{11}c_{12}^2$ with $c_{11}$ square-free. (Recall that the notation $a|b^{\infty}$ means that $a$ divides a sufficiently large power of $b$.) It follows that the sum vanishes unless $c_1|\delta^{\infty}$. Moreover $C_\pm(n,m;c)$ is multiplicative as function in $c$, and for $c=p^u$ the character sum can be easily computed. It follows that
$$
C_{\pm}(n,m;c)=\left(\frac{n^*m^*}{c_{11}c_2}\right)\psi_{\pm}(n^*m^*)\tilde C(\delta;c_1),
$$
where $\tilde C(\delta;c_1)$ depends only on $\delta$, the gcd of $n$ and $m$, and on $c_1$. Moreover we have the bound
$$
\left|\tilde C(\delta;c_1)\right|\leq \prod_p \mathcal M(p^{v_p(\delta)},p^{v_p(c_1)}),
$$
where $\mathcal M(p^u,p^v)=0$ if $v$ is odd and $u\geq v$, $\mathcal M(p^u,p^v)=p^u$ if $v$ is odd and $u<v$, and $\mathcal M(p^u,p^v)=\min\{p^u,p^v\}$ if $v$ is even. The next lemma follows using Hecke's trick.
\begin{lemma}
\label{heck-trick}
We have
$$
\sum_{\delta\leq Q}\frac{1}{\delta} \sum_{\substack{c_1\leq Q\\c_1|\delta^\infty}}\frac{|\tilde C(\delta;c_1)|}{c_{12}}\ll Q^{\varepsilon}.
$$
\end{lemma}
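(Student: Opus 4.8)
The plan is to bound the left-hand side by an absolutely convergent Euler product, the factor $Q^{\varepsilon}$ coming from Hecke's trick. Since $\delta\leq Q$ one has $1\leq(Q/\delta)^{\varepsilon}$; inserting this and then dropping the now harmless constraint $c_{1}\leq Q$ reduces the task to proving
$$
\Sigma_{\varepsilon}:=\sum_{\delta\geq 1}\frac{1}{\delta^{1+\varepsilon}}\sum_{\substack{c_{1}\geq 1\\ c_{1}\mid\delta^{\infty}}}\frac{|\tilde C(\delta;c_{1})|}{c_{12}}\ll_{\varepsilon}1 .
$$

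All the ingredients here are multiplicative: $\delta$, the factorisation $c_{1}=c_{11}c_{12}^{2}$ (hence $c_{12}$), the divisibility $c_{1}\mid\delta^{\infty}$, and the pointwise bound $|\tilde C(\delta;c_{1})|\leq\prod_{p}\mathcal M(p^{v_{p}(\delta)},p^{v_{p}(c_{1})})$ recorded above. Writing $a=v_{p}(\delta)$ and $b=v_{p}(c_{1})$, the condition $c_{1}\mid\delta^{\infty}$ becomes ``$b\geq 1\Rightarrow a\geq 1$'' at each prime, while $v_{p}(c_{12})=\lfloor b/2\rfloor$. Hence $\Sigma_{\varepsilon}\leq\prod_{p}L_{p}$, where
$$
L_{p}=\sum_{a\geq 0}\ \sum_{\substack{b\geq 0\\ b\geq 1\Rightarrow a\geq 1}}\frac{\mathcal M(p^{a},p^{b})}{p^{a(1+\varepsilon)}\,p^{\lfloor b/2\rfloor}} ,
$$
and it remains only to control the local factors.

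The crux is to show $L_{p}=1+O_{\varepsilon}(p^{-1-\varepsilon})$. The term $a=0$ forces $b=0$ and equals $\mathcal M(1,1)=1$. For $a\geq 1$ one splits according to the three clauses defining $\mathcal M$. If $b=0$ the term is $p^{-a(1+\varepsilon)}$. If $b\geq 2$ is even, then $\mathcal M(p^{a},p^{b})=p^{\min(a,b)}$, and the elementary inequality $\min(a,b)\leq a+\tfrac{b}{2}-1$ (valid for $a\geq 1$ and even $b\geq 2$) shows the term is $\leq p^{-1-\varepsilon}$. If $b$ is odd, the term vanishes unless $a<b$ --- this is precisely the clause that $\mathcal M(p^{u},p^{v})=0$ for odd $v$ with $u\geq v$, and it is exactly what saves convergence, since otherwise the terms with $a=b$ would already contribute a divergent amount --- while for $a<b$ one has $\mathcal M(p^{a},p^{b})=p^{a}$ and, since then $b\geq 3$, the term is again $\leq p^{-1-\varepsilon}$. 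In each regime the sum over $b$ (for fixed $a$) is a geometric-type series, and the factor $p^{-a(1+\varepsilon)}$ then forces geometric decay in $a$; a short computation bounds the total by $O_{\varepsilon}(p^{-1-\varepsilon})$, as claimed.

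Finally $\prod_{p}L_{p}=\prod_{p}\bigl(1+O_{\varepsilon}(p^{-1-\varepsilon})\bigr)\ll_{\varepsilon}1$, which gives $\Sigma_{\varepsilon}\ll_{\varepsilon}1$ and hence the lemma, the implied constant depending only on $\varepsilon$ (and blowing up like $\exp(c/\varepsilon)$ as $\varepsilon\to 0$, which is to be expected). Two comments. The loss $Q^{\varepsilon}$ is genuine: already the terms with $b=0$, $a\geq 1$ produce the local factor $1+p^{-1}+p^{-2}+\cdots$, so the series with $\delta^{-1}$ in place of $\delta^{-1-\varepsilon}$ diverges. And the only real point of care is the prime-by-prime bookkeeping for $L_{p}$ --- in particular the use of the vanishing of $\mathcal M(p^{u},p^{v})$ for odd $v$ with $u\geq v$ --- the rest being a routine multiplicative estimate, so I do not anticipate a genuine obstacle.
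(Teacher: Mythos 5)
Your argument is correct and is exactly the intended one: the paper gives no details beyond the phrase ``Hecke's trick,'' which is precisely your replacement of $\delta^{-1}$ by $Q^{\varepsilon}\delta^{-1-\varepsilon}$ followed by the multiplicative/Euler-product estimate using the stated bound $|\tilde C(\delta;c_1)|\le\prod_p\mathcal M(p^{v_p(\delta)},p^{v_p(c_1)})$. Your local-factor bookkeeping (the vanishing of $\mathcal M(p^u,p^v)$ for odd $v\le u$, the inequality $\min(a,b)\le a+\tfrac b2-1$, and the resulting $L_p=1+O(p^{-1-\varepsilon})$) checks out, so the proposal stands as a complete proof of the lemma.
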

Now we are ready to evaluate the character sum $B_r(n,m;c)$. Set $\varepsilon_d = 1$ if $d\equiv 1 \bmod 4$, and $\varepsilon_d = i$ if $d\equiv 3 \bmod 4$. 
\begin{lemma}
\label{char2}
Let $c=2^{\eta}d$ with $d$ odd, $(q,d)=1$ and $\eta\geq 4$. Then $B_r(n,m;c)=0$ unless $4|(n,m)$, and $(m,c)=(n,c)$. Write $n=2n'$ and $m=2m'$. Then we have 
$$
B(\overline{q^\ell}n,\overline{q^\ell}m;c)=c^{\frac{3}{2}}\varepsilon_d\:e\left(\frac{\overline{q^{2\ell}}q^{j+r}n'm'}{c}\right)\left\{C_{+}(q^{j+r}n,-m;c)+i\chi_{-4}(d)C_{-}(q^{j+r}n,-m;c)\right\}.
$$\\
\end{lemma}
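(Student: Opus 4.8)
The plan is to open the Kloosterman sum, interchange the order of summation, and collapse the resulting exponential sums in $\alpha$ and $\beta$ down to a single twisted divisor-type sum over the Kloosterman variable. Writing $S(\overline{q^{j+r}}\alpha^2,\overline{q^{j+r}}\beta^2,c)=\sideset{}{^\star}\sum_{a\bmod c}e(\overline{q^{j+r}}(a\alpha^2+\bar a\beta^2)/c)$ and inserting this into \eqref{char-b} (with $n,m$ replaced by $\overline{q^\ell}n,\overline{q^\ell}m$), one is left with a sum over $a$ of exponential sums in $\alpha,\beta$ whose phase is $\overline{q^{j+r}}(a\alpha^2-2\alpha\beta+\bar a\beta^2)$ plus a linear form in $\alpha,\beta$. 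The crucial point is that the binary quadratic form $a\alpha^2-2\alpha\beta+\bar a\beta^2$ has determinant $a\bar a-1\equiv0\bmod c$, hence is degenerate: completing the square in $\beta$ and evaluating the resulting quadratic Gauss sum makes the $\alpha^2$-term cancel identically, so the $\alpha$-sum becomes a \emph{complete linear} exponential sum. This in turn produces the congruence conditions in the lemma (in particular $(n,c)=(m,c)$) together with the phase $e(\overline{q^{2\ell}}q^{j+r}n'm'/c)$, and what remains is a sum over $a$ which, after the substitution $a\mapsto\overline{q^{j+r}}\bar a$, is exactly of the shape $C_{\pm}(q^{j+r}n,-m;c)$.

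Concretely, I would first factor $c=2^{\eta}d$ by the Chinese Remainder Theorem, since the Kloosterman sum and both exponential sums are multiplicative in $c$, reducing to the odd part and the $2$-adic part separately. On the odd part the computation is routine: $\sum_{\beta\bmod d}e((A\beta^2+B\beta)/d)$ with $A=\overline{q^{j+r}}\bar a$ equals $\varepsilon_d\sqrt d\,\bigl(\tfrac{A}{d}\bigr)e(-\overline{4A}B^2/d)$, which supplies the factor $\varepsilon_d$ and the Jacobi symbol $\bigl(\tfrac{\cdot}{d}\bigr)$ that appears inside $C_{\pm}$; expanding $B^2$ gives the quadratic cancellation above, the ensuing $\alpha$-sum over $\mathbb Z/d$ is linear, and the leftover phase simplifies to the stated one once we write $n=2n'$, $m=2m'$. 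On the $2$-adic part one invokes the standard evaluation of dyadic quadratic Gauss sums: $\sum_{\beta\bmod 2^{\eta}}e((A\beta^2+B\beta)/2^{\eta})$ with $A$ odd vanishes unless $B$ is even, and since $B\equiv\overline{q^{\ell}}m\bmod 2$ this forces $m$ (and, treating the $\alpha$-variable symmetrically, $n$) to be even, which is the source of the conditions $4\mid(n,m)$ and $(n,c)=(m,c)$; when it does not vanish its value is a square-root-size factor times a fourth root of unity depending only on $A\bmod 8$, and this $A\bmod 8$ dependence is precisely what forces the answer to split according to the characters $\psi_{+}$ and $\psi_{-}$ and produces the weight $i\chi_{-4}(d)$ in front of $C_{-}$. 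Collecting the square-root-size Gauss-sum factors with the full-size contributions of the linear $\alpha$-sums at $d$ and at $2^{\eta}$ builds up the overall normalisation $c^{3/2}$. Throughout, the hypothesis $\eta\ge4$ keeps us in the stable range of the dyadic Gauss-sum formulae, away from the exceptional cases $\eta\le3$, and the details follow the exposition in Section~4 of \cite{Mu}.

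The main obstacle is the $2$-adic bookkeeping. One has to pin down exactly when the dyadic Gauss sums vanish --- this is what yields the conditions $4\mid(n,m)$ and $(n,c)=(m,c)$ --- and then compute the precise root-of-unity factor they contribute as a function of $a\bmod 8$, so that it can be matched cleanly with the definitions of $\psi_{\pm}$ and $C_{\pm}$, including the sign $\varepsilon_d$ and the twist $i\chi_{-4}(d)$. Once the degeneracy of the quadratic form has been noticed the odd-modulus part is essentially mechanical; it is the parity analysis at the prime $2$, together with the rescaling $n=2n'$, $m=2m'$, that demands the careful case work.
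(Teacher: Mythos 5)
Your sketch is correct and follows essentially the route the paper intends: the paper gives no proof here, deferring to Section~4 of \cite{Mu}, and your reconstruction — opening the Kloosterman sum, exploiting the degeneracy of $a\alpha^2-2\alpha\beta+\bar a\beta^2$ (a perfect square times $a$ since $a\bar a\equiv 1$) to collapse to a linear sum, factoring $c=2^{\eta}d$ by CRT, evaluating the odd and dyadic Gauss sums, and substituting $a\mapsto \overline{q^{j+r}}\bar a$ to recover $C_{\pm}(q^{j+r}n,-m;c)$ with the factors $\varepsilon_d$, $i\chi_{-4}(d)$ and the phase $e\left(\overline{q^{2\ell}}q^{j+r}n'm'/c\right)$ — is exactly that computation. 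The only part left schematic is the dyadic case analysis forcing $4\mid(n,m)$ and $(n,c)=(m,c)$, which you correctly identify as the place requiring careful bookkeeping.
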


\vspace{.5cm}

Now we will study the integral 
\begin{align}
\label{int-again}
I_r\left(n,m;c\right)=\mathop{\iint}_{\mathbb R^2}h\left(x\right)h\left(y\right)e\left(\frac{4xyN^2}{q^{j+r}c}\right)W_k\left(\frac{2xyN^2}{q^{j+r}c}\right)e\left(\frac{-(nx+my)N}{q^\ell c}\right)dxdy.
\end{align}
which appears in Lemma \ref{poisson}. We will now obtain bounds for this integral using repeated integration by parts. Recall that $cq^r\sim C$ and $C=\frac{N^2}{q^jB}$. So we will write $c=Cz/q^r$ for some $z\in [1,2]$. Differentiating the first four factors and integrating the last factor we get
$$
I_r\left(n,m;\frac{Cz}{q^r}\right)\ll_{i_1,i_2} \left(1+\frac{N^2}{q^jC}\right)^{i_1+i_2}\left(\frac{q^\ell C}{|n|Nq^r}\right)^{i_1}\left(\frac{q^\ell C}{|m|Nq^r}\right)^{i_2} \;\;\;\;\text{for}\;\;i_1, i_2\geq 0.
$$
So it follows that the above integral is negligibly small unless 
\begin{align}
\label{nm-bound}
|n|, |m| \ll \frac{N}{q^{j+r-\ell}}\left(B^{-1}+1\right)q^{\ell\varepsilon}.
\end{align}\\

For $B>q^{\ell\varepsilon}$ there is oscillation in the third factor on the right hand side of \eqref{int-again}. In this case we may also do repeated integration by parts by integrating the third factor and differentiating the other factors. This process yields the bound
$$
I_{r}\left(n,m;\frac{Cz}{q^r}\right)\ll_{i_1,i_2} \left(1+\frac{|n|Nq^r}{q^\ell C}\right)^{i_1}\left(1+\frac{|m|Nq^r}{q^\ell C}\right)^{i_2}\left(\frac{q^jC}{N^2}\right)^{i_1+i_2}\;\;\;\;\text{for}\;\;i_1, i_2\geq 0.
$$
Since we are assuming that $C=\frac{N^2}{q^jB}< \frac{N^2}{q^{j+\ell\varepsilon}}$, it follows that the integral is arbitrarily small unless $|n|, |m| \gg \frac{N}{q^{j+r-\ell}}q^{-\ell\varepsilon}$. 
\begin{lemma}
\label{int-000}
Suppose $C<\frac{N^2}{q^{j+\ell\varepsilon}}$ (or in other words $B>q^{\ell\varepsilon}$) then the integral $I_{r}\left(n,m;\tfrac{Cz}{q^r}\right)$ is negligibly small unless 
$$
|n|, |m| \in \left[\frac{N}{q^{j+r-\ell}}q^{-\ell\varepsilon},\frac{N}{q^{j+r-\ell}}q^{\ell\varepsilon}\right].
$$\\
\end{lemma}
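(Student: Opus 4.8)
The plan is to obtain the lemma simply by combining the two integration-by-parts estimates established just above the statement; no new idea is required. Both directions of integration by parts are legitimate because $h$ is supported in $[1,2]$, so there are no boundary terms, and because \eqref{bessel-bd0} controls every derivative of $x\mapsto W_k\big(2xyN^2/(q^{j+r}c)\big)$ (and the analogous $y$-derivatives) by a harmless power of $q^{\ell\varepsilon}$ throughout the relevant ranges. The upper end of the window will come from one direction of integration by parts, the lower end from the other.

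For the upper cutoff I would simply invoke \eqref{nm-bound}: it was produced by integrating the factor $e\big(-(nx+my)N/(q^\ell c)\big)$ in \eqref{int-again} repeatedly while differentiating the remaining factors, and it asserts that $I_r$ is negligible unless $|n|,|m|\ll \frac{N}{q^{j+r-\ell}}(B^{-1}+1)q^{\ell\varepsilon}$. Since the hypothesis is $B>q^{\ell\varepsilon}$, we have $B^{-1}<1$, hence $B^{-1}+1\asymp1$, and this collapses to $|n|,|m|\ll \frac{N}{q^{j+r-\ell}}q^{\ell\varepsilon}$.

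For the lower cutoff I would use the second estimate, which is available precisely because $B>q^{\ell\varepsilon}$: integrating the genuinely oscillating factor $e(4xyN^2/(q^{j+r}c))$ — whose $x$-derivative has size $\asymp N^2/(q^{j+r}c)\asymp N^2/(q^jC)=B$, so each step gains $B^{-1}$ — and differentiating the rest, in particular $e(-(nx+my)N/(q^\ell c))$ whose $x$-derivative has size $\asymp |n|Nq^r/(q^\ell C)$, yields for all $i_1,i_2\ge0$
\[
I_r\Big(n,m;\tfrac{Cz}{q^r}\Big)\ll_{i_1,i_2}\Big(1+\tfrac{|n|Nq^r}{q^\ell C}\Big)^{i_1}\Big(1+\tfrac{|m|Nq^r}{q^\ell C}\Big)^{i_2}\Big(\tfrac{q^jC}{N^2}\Big)^{i_1+i_2}.
\]
Using $BC=N^2/q^j$ one finds $\tfrac{q^jC}{N^2}=B^{-1}$ and $\tfrac{|n|Nq^r}{q^\ell C}\cdot B^{-1}=\tfrac{|n|q^{j+r-\ell}}{N}$, so if $|n|<\tfrac{N}{q^{j+r-\ell}}q^{-\ell\varepsilon}$ (and likewise for $m$) then letting $i_1,i_2\to\infty$ forces $I_r$ to be negligibly small, whether or not $|n|Nq^r/(q^\ell C)$ exceeds $1$; the borderline region $|n|\le q^\ell C/(Nq^r)=\tfrac{N}{q^{j+r-\ell}B}$ is in any case contained in $|n|<\tfrac{N}{q^{j+r-\ell}}q^{-\ell\varepsilon}$ since $B>q^{\ell\varepsilon}$. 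Intersecting the two cutoffs gives the stated window for both $n$ and $m$.

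The only point needing a little attention is the bookkeeping: one must verify that the amplifying factors $1+B$ appearing in \eqref{nm-bound} and $1+|n|Nq^r/(q^\ell C)$ appearing in the second estimate are, after the substitution $BC=N^2/q^j$, exactly compensated by the $B^{-1}$ (resp.\ $q^jC/N^2$) gained at each integration by parts, so that a genuine power of $q^{\ell\varepsilon}$ is saved per step the moment $n$ (respectively $m$) leaves the asserted dyadic window. This is routine, and I do not foresee any real obstacle.
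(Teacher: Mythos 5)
Your proposal is correct and follows exactly the route the paper takes: the lemma is precisely the summary of the two integration-by-parts estimates displayed just before it, with the first (integrating the $e\left(-(nx+my)N/(q^\ell c)\right)$ factor) giving the upper cutoff once $B>q^{\ell\varepsilon}$ makes $B^{-1}+1\asymp 1$, and the second (integrating the oscillating $e\left(4xyN^2/(q^{j+r}c)\right)$ factor, available only for $B>q^{\ell\varepsilon}$) giving the lower cutoff via $\left(1+\tfrac{|n|Nq^r}{q^\ell C}\right)\tfrac{q^jC}{N^2}\ll q^{-\ell\varepsilon}$. Your bookkeeping, including treating $n$ and $m$ separately by choosing $(i_1,i_2)$ appropriately, matches the intended argument, so nothing further is needed.
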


We end this section by noting the following consequence of the above bounds. 
\begin{lemma}
\label{mid-lemma}
For $r>2\ell-j-2$ and any $C$, we have
$$
\mathcal T_{r,C}\ll Nq^{\ell\varepsilon}.
$$
\end{lemma}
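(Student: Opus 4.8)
The plan is to open $\mathcal T_{r,C}$ via the dual expression for $T_r$ furnished by Lemma~\ref{poisson}, substitute the explicit character-sum evaluations from Lemmas~\ref{char1} and~\ref{char2}, and then estimate everything trivially; the point is that once $r>2\ell-j-2$ the dual frequencies $(n,m)$ are squeezed into a very short box, so the trivial bound already suffices. Writing Lemma~\ref{poisson} with modulus $16c$ we get
$$
T_r(16c)=\frac{N^2}{q^{2\ell}(16c)^2}\mathop{\sum\sum}_{n,m\in\mathbb Z}D_r(n,m;16c)\,I_r(n,m;16c),
$$
and by~\eqref{char-sum-d-r} the character sum splits as $D_r=A_r(\overline{16c}n,\overline{16c}m;q^\ell)B_r(\overline{q^\ell}n,\overline{q^\ell}m;16c)$. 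By Lemma~\ref{char1}, $|A_r|=q^\ell$ when $(nm,q)=1$ and $A_r=0$ otherwise, so in particular $(n,m)=(0,0)$ — and indeed any frequency with $q\mid nm$ — contributes nothing (the underlying Gauss sum of the primitive character $\chi(\tfrac{\cdot}{q})^r\bmod q^\ell$ vanishes). By Lemma~\ref{char2}, $|B_r(\overline{q^\ell}n,\overline{q^\ell}m;16c)|\le(16c)^{3/2}\bigl(|C_+(q^{j+r}n,-m;16c)|+|C_-(q^{j+r}n,-m;16c)|\bigr)$, vanishing unless $4\mid(n,m)$ and $(n,16c)=(m,16c)$; since each $C_\pm$ is a character sum over the $a\bmod 16c$ solving the linear congruence $a\,q^{j+r}n\equiv-m$, splitting according to whether $16c$ is larger or smaller than the cutoff $X$ yields $\sum_{|m|\le X}(|C_+|+|C_-|)\ll X+(n,16c)$ for each fixed $n$, and hence $\sum_{|n|,|m|\le X}(|C_+|+|C_-|)\ll(X^2+16c)(16c)^\varepsilon$ after summing over $n$.

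For the integral I would use two facts. First, the size bound in~\eqref{bessel-bd0} gives $|I_r(n,m;c)|\ll\mathcal W(B):=\min\{B^{k-1},B^{-1/2}\}$, since the argument of $W_k$ inside $I_r$ is $\asymp B$. Second, the integration-by-parts computation leading to~\eqref{nm-bound} confines the frequencies to $|n|,|m|\ll X:=\tfrac{N}{q^{j+r-\ell}}(B^{-1}+1)q^{\ell\varepsilon}$ up to a negligible error. Feeding these into the above, carrying out the $c$-sum via $\sum_{c\asymp C/q^r}c^{-3/2}\asymp(C/q^r)^{-1/2}$ and $\sum_{c\asymp C/q^r}c^{-1/2}\asymp(C/q^r)^{1/2}$, and using $C=N^2/(q^jB)$, I expect to reach a bound of the shape
$$
\mathcal T_{r,C}\ll\Bigl(\frac{N^3\,B^{1/2}(B^{-1}+1)^2\,\mathcal W(B)}{q^{2(j+r)-\ell}}+\frac{N^3\,B^{-1/2}\mathcal W(B)}{q^{j+r+\ell}}\Bigr)q^{\ell\varepsilon},
$$
the second term being the contribution of the range $16c>X$ (where the $C_\pm$ cannot be averaged over $m$). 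Because $k\ge3$, both $B^{1/2}(B^{-1}+1)^2\min\{B^{k-1},B^{-1/2}\}$ and $B^{-1/2}\min\{B^{k-1},B^{-1/2}\}$ are $\ll1$ uniformly in $B>0$, so this collapses to $\mathcal T_{r,C}\ll\bigl(N^3q^{\ell-2(j+r)}+N^3q^{-(j+r+\ell)}\bigr)q^{\ell\varepsilon}$.

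It remains to feed in the two hypotheses. From $r>2\ell-j-2$ we get $j+r\ge2\ell-1$, so both exponents satisfy $2(j+r)-\ell\ge3\ell-2$ and $j+r+\ell\ge3\ell-1$; together with $N\le q^{\frac32\ell+\ell\varepsilon}$ this gives $N^3q^{\ell-2(j+r)}\ll_q Nq^{\ell\varepsilon}$, and likewise for the other term, whence $\mathcal T_{r,C}\ll Nq^{\ell\varepsilon}$ — the bounded power of $q$ picked up along the way is harmless since $q$ is fixed, and can in any case be absorbed into $q^{\ell\varepsilon}$ after shrinking $\varepsilon$. The argument is uniform in $C$, as the statement requires.

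The step I expect to be the real obstacle is exactly this uniformity in $C$, i.e.\ in $B$. When $B$ is small the dual box is long — of length $\asymp B^{-1}$ in each variable — and one needs the sharp decay $\mathcal W(B)\asymp B^{k-1}$ of the $W_k$-factor to beat it; when $B$ is large that decay weakens to $B^{-1/2}$, and one must instead rely on the two-sided localization of the frequencies (the sharpened restriction of Lemma~\ref{int-000}) rather than on $W_k$-decay. The two regimes are reconciled precisely by the elementary inequality $B^{1/2}(B^{-1}+1)^2\min\{B^{k-1},B^{-1/2}\}\ll1$. A secondary technical point is the range $16c>X$ of the $c$-sum, where $C_\pm$ is estimated individually by $(n,16c)$ rather than on average over $m$; this is what produces the second term above, and it is controlled in the same way.
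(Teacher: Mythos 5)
Your argument is correct and is essentially what the paper intends: the lemma is stated there as a direct "consequence of the above bounds," i.e.\ trivial estimation of the Poisson-dualized sum of Lemma~\ref{poisson} using the character-sum evaluations ($|A_r|\le q^\ell$, the $C_\pm$/gcd bounds from Lemma~\ref{char2}) and the integral bounds \eqref{bessel-bd0} and \eqref{nm-bound}, with $r>2\ell-j-2$ shortening the dual range so that $N\le q^{\frac32\ell+\ell\varepsilon}$ finishes the job. Your bookkeeping (the two terms, the uniformity in $B$ via $k>2$, and absorbing bounded powers of the fixed prime $q$ into $q^{\ell\varepsilon}$) matches the intended computation.
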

Consequently from the next section onwards we shall only consider the case $r\leq 2\ell-j-2$. 


\section{Reciprocity and Poisson summation - II}
\label{sec-poi}

In Section \ref{sec-char}, we explicitly computed the character sum which appears in Lemma \ref{poisson}. Substituting this explicit form of the character sum in \eqref{tr}, and ignoring the negligible contribution that comes from the large values of $c$ and the large values of the gcd $(n,m)$ (as we have noted after Lemma \ref{poisson}), we are basically left with the job of analysing sums of the type 
\begin{align}
\label{tr'}
\mathcal T_{r,C}^\star=\frac{N^2}{q^{\ell+\frac{j+r}{2}}}\sum_{\substack{\delta\leq Q\\q\nmid \delta}}
\sum_{\substack{c_1\leq Q\\c_1|(2\delta)^{\infty}}}\left|\tilde C(\delta; c_1)\right|\mathop{\sum\sum}_{\substack{1\leq u,v\leq Q\\u,v|(2\delta)^{\infty}\\(u,v)=1}}\Bigl|T_{r,C}^\star(\delta,c_1,u,v)\Bigr|
\end{align}
where $c_1=c_{11}c_{12}^2$ with $c_{11}$ square-free,
\begin{align}
\label{tr''}
T_{r,C}^\star(\delta,c_1,u,v)=\mathop{\sum\sum}_{\substack{n,m=1\\(n,m)=1\\(nm,\delta q)=1\\ n,m\equiv 1 \bmod 4}}^{\infty}
\overline{\chi(n)}\chi(m)\left(\frac{nm}{c_{11}q^r}\right)S_{r,c_1}^\star(\delta un,\delta vm),
\end{align}
and 
\begin{align}
\label{S_r}
S_{r,c_1}^\star(n,m)=\sum_{\substack{c_2=1\\(c_2,2qnm)=1}}^{\infty}\left(\frac{nmq^r}{c_2}\right)e\left(\frac{\overline{q^{2\ell}}q^{j+r}nm}{c_1c_2}\right)\frac{I_{r}(2n,2m;c_1c_2)}{(c_1c_2)^{3/2}}G\left(\frac{c_1c_2q^r}{C}\right).
\end{align}
(Here $Q=q^{2012\ell}$.) Recall that by our choice $2\ell-j\geq 0$. \\

The assumption on $n$ and $m$, that they are $\equiv 1 \bmod 4$, is made to simplify some of the standard complications related to the prime $2$. In general we may take out the $2$-primary part from $n$ or $m$, and then split the sum into four parts depending on the possible congruence classes modulo $4$. Then for each sum we follow the same steps that we take below. Also note that for notational simplicity we are just focusing on the contribution from the nonnegative $n$ and $m$. \\

Our next step is an application of the Poisson summation formula on the sum over $c_2$ in \eqref{S_r}. To this end we have to first apply reciprocity
$$
e\left(\frac{\overline{q^{2\ell-j}}q^{r}nm}{c_1c_2}\right)=
e\left(\frac{-\overline{c_1c_2}q^{r}nm}{q^{2\ell-j}}\right)
e\left(\frac{q^{r}nm}{q^{2\ell-j}c_1c_2}\right).
$$
We will include the last factor in our smooth function. Set
$$
\mathcal I_{r}(n,m;c)=e\left(\frac{q^{r}nm}{q^{2\ell-j}c}\right)\frac{I_{r}(2n,2m;c)}{c^{3/2}}G\left(\frac{cq^r}{C}\right).
$$ 
Now consider the sum 
\begin{align}
\label{S_r*}
S_{r,c_1}^\star(n,m)=\sum_{\substack{c_2\in\mathbb Z\\(c_2,2qnm)=1}}\left(\frac{c_2}{nmq^r}\right)e\left(\frac{-\overline{c_1c_2}q^{r}nm}{q^{2\ell-j}}\right)\mathcal I_{r}(n,m;c_1c_2).
\end{align}
Here we have applied quadratic reciprocity. This is one of the places where we use the assumption that $n,m\equiv 1 \bmod 4$. We will use this yet another time in the evaluation of the character sum that appears in our next result.
\begin{lemma}
\label{poisson2}
We have
$$
S_{r,c_1}^\star(n,m)=\frac{q^{\frac{r}{2}}}{2c_1q^{2\ell-j}\sqrt{C}[n,m]}\sum_{c_2\in\mathbb Z}E_{r,c_1}(c_2;n,m)\tilde{\mathcal I}_{r,c_1}(c_2;n,m)
$$
where the character sum is given by 
$$
E_{r,c_1}(c_2;n,m)=\sideset{}{^\star}\sum_{\beta \bmod 2q^{2\ell-j}[n,m]}\left(\frac{\beta }{nmq^r}\right)e\left(\frac{-\overline{c_1\beta}q^{r}nm}{q^{2\ell-j}}\right)e\left(\frac{\beta c_2}{2q^{2\ell-j}[n,m]}\right),
$$
and
\begin{align*}
\tilde{\mathcal I}_{r,c_1}(c_2;n,m)=\int G(z)I_{r}\left(2n,2m;\frac{Cz}{q^r}\right)
e\left(\frac{q^{2r}nm}{q^{2\ell-j}Cz}-\frac{c_2Cz}{2q^{2\ell-j+r}c_1[n,m]}\right)\frac{dz}{z^{\frac{3}{2}}}.
\end{align*}
\end{lemma}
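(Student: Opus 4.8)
The plan is to apply the Poisson summation formula to the sum over $c_2$ in \eqref{S_r*}. The summand there splits as the product of an arithmetic weight
$$
\mathbf{1}_{(c_2,2qnm)=1}\left(\frac{c_2}{nmq^r}\right)e\left(\frac{-\overline{c_1c_2}\,q^{r}nm}{q^{2\ell-j}}\right),
$$
which is periodic in $c_2$, and the function $c_2\mapsto\mathcal I_{r}(n,m;c_1c_2)$, which is smooth and compactly supported because of the cutoff $G(cq^{r}/C)$. So the strategy is: (i) pin down the exact period $M$ of the arithmetic weight; (ii) split the $c_2$-sum into residue classes mod $M$ and Poisson-sum the smooth part along each progression; (iii) recombine the residue sum into the complete character sum $E_{r,c_1}$; (iv) change variables in the resulting oscillatory integral to recognise $\tilde{\mathcal I}_{r,c_1}$. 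I expect step (i) to be the only delicate point.

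For (i): the exponential depends on $c_2$ only through $\overline{c_2}\bmod q^{2\ell-j}$ — here $(c_2,q)=1$, and $q^{r}\mid q^{2\ell-j}$ since $r\le 2\ell-j-2$ — so it has period dividing $q^{2\ell-j}$. For the Jacobi symbol, set $g=(n,m)$; then $g^{2}\mid nm$ with $nm/g^{2}=[n,m]/g$, and since $\left(\frac{c_2}{g^{2}}\right)=1$ whenever $(c_2,g)=1$ we get $\left(\frac{c_2}{nmq^{r}}\right)=\left(\frac{c_2}{([n,m]/g)\,q^{r}}\right)$; as $nm$ and $[n,m]$ have the same prime divisors, this factor and the coprimality condition are both periodic with period dividing $2q^{2\ell-j}[n,m]$. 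Hence $M:=2q^{2\ell-j}[n,m]$ works, and this is exactly the step that replaces $nm$ by $[n,m]$ in the modulus of $E_{r,c_1}$; it is also where the congruence $n\equiv m\equiv1\bmod4$ (already used to pass to \eqref{S_r*}) keeps the symbol manipulations clean.

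For (ii)–(iii): writing $c_2=\beta+Mt$ with $\beta$ over residues mod $M$ coprime to $M$ (equivalently $(\beta,2qnm)=1$), \eqref{S_r*} becomes
$$
S_{r,c_1}^\star(n,m)=\sideset{}{^\star}\sum_{\beta\bmod M}\left(\frac{\beta}{nmq^r}\right)e\left(\frac{-\overline{c_1\beta}\,q^{r}nm}{q^{2\ell-j}}\right)\sum_{t\in\mathbb Z}\mathcal I_{r}\bigl(n,m;c_1(\beta+Mt)\bigr).
$$
The inner sum is a smooth compactly supported function summed over an arithmetic progression, so Poisson summation applies with no convergence issue and gives
$$
\sum_{t\in\mathbb Z}\mathcal I_{r}\bigl(n,m;c_1(\beta+Mt)\bigr)=\frac{1}{M}\sum_{c_2\in\mathbb Z}e\left(\frac{\beta c_2}{M}\right)\int\mathcal I_{r}(n,m;c_1x)\,e\left(-\frac{c_2x}{M}\right)dx.
$$
Substituting back and performing the sum over $\beta$ produces $E_{r,c_1}(c_2;n,m)$ verbatim, so that $S_{r,c_1}^\star(n,m)=\frac{1}{M}\sum_{c_2\in\mathbb Z}E_{r,c_1}(c_2;n,m)\,\mathcal J(c_2)$, where $\mathcal J(c_2)$ denotes the $x$-integral above at frequency $c_2/M$.

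For (iv): in $\mathcal J(c_2)$ I substitute $x=Cz/(c_1q^{r})$, so that $c_1x=Cz/q^{r}$ (recall $cq^{r}\sim C$); then $\mathcal I_{r}(n,m;c_1x)$ becomes $e\bigl(q^{2r}nm/(q^{2\ell-j}Cz)\bigr)\,I_{r}(2n,2m;Cz/q^{r})\,G(z)\,(Cz/q^{r})^{-3/2}$, the Jacobian together with the $q$- and $C$-powers from $(Cz/q^{r})^{-3/2}$ contribute $q^{r/2}/(c_1\sqrt{C})$ outside the integral and a factor $z^{-3/2}$ inside, and the phase $-c_2x/M$ turns into $-c_2Cz/(2q^{2\ell-j+r}c_1[n,m])$. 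What remains is precisely $\tilde{\mathcal I}_{r,c_1}(c_2;n,m)$, so $\mathcal J(c_2)=\frac{q^{r/2}}{c_1\sqrt{C}}\,\tilde{\mathcal I}_{r,c_1}(c_2;n,m)$. Combining with the $\frac{1}{M}$ from Poisson gives the constant $\frac{1}{M}\cdot\frac{q^{r/2}}{c_1\sqrt{C}}=\frac{q^{r/2}}{2c_1q^{2\ell-j}\sqrt{C}[n,m]}$, which is the claimed identity. As noted, the one real obstacle is the bookkeeping of step (i): correctly identifying the period $2q^{2\ell-j}[n,m]$ of the arithmetic weight and checking that the coprimality conditions in \eqref{S_r*} and in the definition of $E_{r,c_1}$ agree; everything after that is a routine Poisson computation and change of variables.
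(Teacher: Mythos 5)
Your argument is correct and is exactly the "standard application of the Poisson summation formula" that the paper invokes without detail: you identify the period $2q^{2\ell-j}[n,m]$ of the arithmetic weight, split into residue classes, Poisson-sum the smooth part, and the change of variables $x=Cz/(c_1q^r)$ reproduces the phase, the factor $q^{r/2}/(c_1\sqrt{C})$, and with $1/M$ the stated constant $\frac{q^{r/2}}{2c_1q^{2\ell-j}\sqrt{C}[n,m]}$. No discrepancies with the paper's (unwritten) proof.
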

\begin{proof}
This is a standard application of the Poisson summation formula.
\end{proof}


\section{The character sum $E_{r,c_1}(c_2;n,m)$ and the integral $\tilde{\mathcal I}_{r,c_1}(c_2;n,m)$}
\label{sec-char2}

Next we will explicitly evaluate the character sum which appears in Lemma \ref{poisson2}. For \eqref{tr'}, we only require to consider the character sum $E_{r,c_1}(c_2;u\delta n,v\delta m)$ where $uv|(2\delta)^{\infty}$, $(u,v)=(n,m)=1$, $n,m\equiv 1\bmod{4}$ and $(nm,\delta q)=1$. In this case the sum splits into a product given by 
\begin{align}
\label{split-sum}
\left(\frac{\delta}{nm}\right)\left[\:\sideset{}{^\star}\sum_{\beta \bmod 2\delta uv}\left(\frac{\beta }{uv}\right)e\left(\frac{\beta c_2}{2\delta uv}\right)\right]&\left[\:\sideset{}{^\star}\sum_{\beta \bmod nm}\left(\frac{\beta}{nm}\right)e\left(\frac{\beta c_2}{nm}\right)\right]\\
\nonumber &\times\left[\:\sideset{}{^\star}\sum_{\beta \bmod q^{2\ell-j}}\left(\frac{\beta }{q^r}\right)e\left(\frac{-\overline{c_1\beta}q^{r}\delta w+\overline{2w}\beta c_2}{q^{2\ell-j}}\right)\right],
\end{align}
where $w=[\delta un,\delta vm]=\delta uvnm$. The first two sums are just Gauss sums and the last sum is a generalized Kloosterman sum. We denote the first sum by $g_{\delta,u,v}^\star(c_2)$, and note the following bound:
\begin{lemma}
\label{bound-for-g}
Let $\delta=\delta_1\delta_2^2$ and $c_2=c_{21}c_{22}^2$ with $\delta_1$, $c_{21}$ square-free. Then we have 
$$
g_{\delta,u,v}^\star(c_2)\ll uv\delta_2(\delta_1,c_{21})(\delta_1\delta_2,c_{22}).
$$
\end{lemma}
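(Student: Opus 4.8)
The plan is to estimate the Gauss-type sum
$$
g_{\delta,u,v}^\star(c_2)=\sideset{}{^\star}\sum_{\beta \bmod 2\delta uv}\left(\frac{\beta}{uv}\right)e\left(\frac{\beta c_2}{2\delta uv}\right)
$$
by factoring the modulus $2\delta uv$ into prime powers and using multiplicativity. Since $(u,v)=1$ and $uv\mid (2\delta)^\infty$, every prime dividing $uv$ also divides $\delta$, but $u$ and $v$ share no prime; so the modulus $2\delta uv$ breaks up (via CRT) into a product over the prime $2$ and over odd primes $p\mid \delta$, with local moduli $p^{v_p(2\delta uv)}$. At each such prime the local sum is of the form $\sideset{}{^\star}\sum_{\beta\bmod p^a}\left(\tfrac{\beta}{p^b}\right)e(\beta c_2'/p^a)$ for appropriate exponents $a,b$ and a unit $c_2'$ absorbing the other factors. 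The idea is: if $b$ is even the Legendre symbol is trivial and the local sum is a Ramanujan sum, of size at most $\gcd(p^a,c_2\text{-part})$; if $b$ is odd the local sum is a twisted Gauss sum, which either vanishes or has absolute value roughly $p^{a/2}$ (up to $\gcd$-losses with $c_2$). Collecting these local contributions and re-expressing the exponents $v_p(u),v_p(v),v_p(\delta)$ should reproduce the claimed bound.

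First I would record the exact local evaluations. For the prime $2$: since $uv$ is built only from odd primes dividing $\delta$ (which is odd as $q\nmid\delta$ is irrelevant here, but $\delta$ need not be odd — note $\delta_2$ in the statement, so let me instead split $\delta$ into its $2$-part and odd part), the contribution at $2$ is a Ramanujan-type sum $\ll \gcd(2\cdot 2\text{-part of }\delta, c_2)$, which is harmless. For an odd prime $p\mid\delta$, write $a=v_p(2\delta uv)=v_p(\delta)+v_p(u)+v_p(v)$ (one of $v_p(u),v_p(v)$ is zero) and $b=v_p(uv)=v_p(u)+v_p(v)$. The local sum $\sideset{}{^\star}\sum_{\beta\bmod p^a}\left(\tfrac{\beta}{p}\right)^b e(\beta c_2'/p^a)$ is: when $b$ even, the Ramanujan sum $c_{p^a}(c_2')$, bounded by $\min(p^a, p^{v_p(c_2)})$ and in fact $\ll p^{v_p((\delta_1,c_{21})(\delta_1\delta_2,c_{22}))}$-type factors after bookkeeping; when $b$ odd, a twisted Gauss sum with a primitive character of conductor $p^{\min(a,2\lceil?\rceil)}$ — more precisely it equals (up to a unit and the relevant Gauss sum) a quantity of size $p^{(a+\text{something})/2}$, controlled by $p^{\lceil a/2\rceil}$ times a $\gcd$ defect against $c_2$.

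Next I would multiply the local bounds. Grouping the $p^{v_p(u)}$ and $p^{v_p(v)}$ factors gives a clean $uv$; the remaining factors coming from the $\delta$-exponents are what must be packaged into $\delta_2(\delta_1,c_{21})(\delta_1\delta_2,c_{22})$. Writing $\delta=\delta_1\delta_2^2$ with $\delta_1$ squarefree, a prime $p\mid\delta_1$ contributes $v_p(\delta)=2v_p(\delta_2)+1$ (odd), so locally we are in the Gauss-sum case, producing $p^{v_p(\delta_2)}$ (the "half" of the even part $\delta_2^2$, i.e.\ the $\delta_2$ factor) times a $\gcd$ loss $(\delta_1,c_{21})(\delta_1\delta_2,c_{22})$ coming from how $c_2=c_{21}c_{22}^2$ interacts with the conductor; a prime $p\mid\delta_2$ but $p\nmid\delta_1$ contributes $v_p(\delta)=2v_p(\delta_2)$ (even), so a Ramanujan sum bounded by $p^{v_p(\delta_2)}$, again feeding the $\delta_2$ factor and a $\gcd(\delta_2,c_{22}\text{-part})$ defect. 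Taking the product over all $p$ then yields exactly $uv\,\delta_2\,(\delta_1,c_{21})(\delta_1\delta_2,c_{22})$.

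The main obstacle I anticipate is the careful bookkeeping in the twisted-Gauss-sum case: one must track precisely the conductor of the character $\beta\mapsto\left(\tfrac{\beta}{p}\right)^b$ against the additive character $e(\beta c_2'/p^a)$, determine when the local sum vanishes versus when it attains size $\asymp p^{a/2}$, and show that the cancellation-vs-$\gcd$ trade-off never exceeds $p^{v_p(\delta_2)}$ times the asserted $\gcd$ factors — in particular checking that the "$+1$" in the exponent $v_p(\delta)=2v_p(\delta_2)+1$ for $p\mid\delta_1$ does \emph{not} cost an extra $p^{1/2}$ beyond what the $\gcd$ terms absorb. This is exactly the kind of prime-by-prime Gauss sum analysis done in Section 4 of \cite{Mu}, so I would lean on that computation, verify the exponent arithmetic in each of the three cases ($p\mid\delta_1$; $p\mid\delta_2,\ p\nmid\delta_1$; $p\mid uv$), and then assemble the global bound by multiplicativity.
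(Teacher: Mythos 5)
Your plan is essentially the paper's own (very terse) proof: reduce by multiplicativity to prime powers, bound the local sums by standard Gauss and Ramanujan sum estimates, and then check case by case that the resulting prime power is absorbed by $uv\,\delta_2(\delta_1,c_{21})(\delta_1\delta_2,c_{22})$ --- exactly the steps the paper states while omitting the details. One small correction to your bookkeeping: at an odd prime $p\mid\delta_1$ with $p\nmid uv$ the character exponent $b=v_p(uv)=0$ is even, so the local sum is a Ramanujan sum modulo $p^{2v_p(\delta_2)+1}$ rather than a Gauss sum (the Gauss-sum case is governed by the parity of $v_p(uv)$, not of $v_p(\delta)$); the claimed local bound $p^{v_p(\delta_2)}$ times the gcd factors still holds there, using that $c_{21}$ is squarefree.
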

\begin{proof}
First using multiplicativity we reduce to the case where $\delta$, $u$ and $v$ are powers of a given prime $p$. Then we use well-known bounds for the Gauss sums and Ramanujan sums. Finally we need to verify that the power of the prime $p$ which appears on the right hand side is sufficiently large. For this we need to consider several cases. We prefer to omit the details. 
\end{proof}

The middle sum in \eqref{split-sum} is given by $g(n,c_2)g(m,c_2)$ where $g(n,c)$ stands for the usual Gauss sum 
$$
g(n,c)=\sideset{}{^\star}\sum_{\beta \bmod n}\left(\frac{\beta }{n}\right)e\left(\frac{\beta c}{n}\right).
$$
The other character sum modulo $q^{2\ell-j}$, after a change of variable is given by 
$$
\left(\frac{\delta uvnm}{q^r}\right)\sideset{}{^\star}\sum_{\beta \bmod q^{2\ell-j}}\left(\frac{\beta }{q^r}\right)e\left(\frac{-\overline{c_1\beta}q^{r}\delta+\overline{2}\beta c_2}{q^{2\ell-j}}\right)=\left(\frac{\delta uvnm}{q^r}\right)S_r(\bar 2c_2,-\overline{c_1}q^r\delta;q^{2\ell-j}).
$$
The Kloosterman type sum $S_r(\bar 2c_2,-\overline{c_1}q^r\delta;q^{2\ell-j})$ is free of $n$, $m$, and it is bounded above by $4(c_2,q^r)q^{\ell-\frac{j}{2}}$. 
\begin{lemma}
\label{char31}
Let $u,v,n,m$ be as in \eqref{tr'}. Then
$$
E_{r,c_1}(c_2;u\delta n,v\delta m)=g_{\delta,u,v}^\star(c_2)g(n,c_2)g(m,c_2)\left(\frac{\delta uvnm}{q^r}\right)\left(\frac{\delta}{nm}\right)S_r(\bar 2c_2,-\overline{c_1}q^r\delta;q^{2\ell-j}).
$$
\end{lemma}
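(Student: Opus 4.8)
The plan is to read off Lemma~\ref{char31} from the factorization \eqref{split-sum} together with the evaluations of its three constituent sums recorded in the discussion immediately preceding the statement. Specializing $E_{r,c_1}(c_2;\cdot,\cdot)$ to the arguments $u\delta n$ and $v\delta m$ with $u,v,n,m$ as in \eqref{tr'}, the modulus $2q^{2\ell-j}[u\delta n,v\delta m]=2q^{2\ell-j}\delta uvnm$ splits into the pairwise coprime parts $2\delta uv$, $nm$ and $q^{2\ell-j}$; here one uses $(u,v)=(n,m)=1$, $uv\mid(2\delta)^\infty$, $(nm,\delta q)=1$, the oddness of $n,m$ and of $q$. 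A Chinese Remainder Theorem split then decouples the $\beta$-sum into three sums, one over each of these moduli, the Jacobi symbol $\left(\frac{\beta}{(u\delta n)(v\delta m)q^r}\right)$ collapsing to $\left(\frac{\beta}{uv}\right)\left(\frac{\beta}{nm}\right)\left(\frac{\beta}{q^r}\right)$ since $\left(\frac{\beta}{\delta^2}\right)=1$; this is precisely \eqref{split-sum}, and the congruences $n,m\equiv1\bmod4$ are exactly what make the quadratic reciprocity used to normalize the pieces clean, so that nothing survives beyond the prefactor $\left(\frac{\delta}{nm}\right)$.

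With \eqref{split-sum} in hand I would identify the three factors. The sum over $\beta\bmod 2\delta uv$ is $g_{\delta,u,v}^\star(c_2)$ by definition. The sum over $\beta\bmod nm$ factors once more via CRT (using $(n,m)=1$, and $n,m\equiv1\bmod4$ to dispose of a reciprocity sign) into the Gauss sums $g(n,c_2)g(m,c_2)$. For the sum over $\beta\bmod q^{2\ell-j}$, the one manipulation deserving a word is the substitution $\beta\mapsto w\beta$ with $w=\delta uvnm$, legitimate since $(w,q)=1$: the factor $w$ cancels out of both phase terms $-\overline{c_1\beta}q^r\delta w$ and $\overline{2w}\beta c_2$, turning the sum into $\sideset{}{^\star}\sum_\beta\left(\frac{\beta}{q^r}\right)e\big(\tfrac{\bar 2c_2\beta-\overline{c_1}q^r\delta\,\overline\beta}{q^{2\ell-j}}\big)=S_r(\bar 2c_2,-\overline{c_1}q^r\delta;q^{2\ell-j})$, while $\left(\frac{w}{q^r}\right)=\left(\frac{\delta uvnm}{q^r}\right)$ is pulled out. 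Multiplying the three identified pieces by the prefactor $\left(\frac{\delta}{nm}\right)$ gives the asserted identity.

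I expect the only genuine difficulty to be the reciprocity bookkeeping behind \eqref{split-sum}: one must follow every Jacobi symbol and every quadratic-Gauss-sum sign through the modulus factorization and the auxiliary changes of variable, confirming that the surviving factors are exactly $\left(\frac{\delta}{nm}\right)$ and $\left(\frac{\delta uvnm}{q^r}\right)$. Since that computation is effectively carried out already in the passage leading up to the lemma, everything that remains — the CRT split into Gauss sums and the $\beta\mapsto w\beta$ normalization of the twisted Kloosterman sum — is routine, and I anticipate no further obstacle.
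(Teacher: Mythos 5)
Your proposal is correct and follows essentially the same route as the paper: the lemma is, in the paper too, just the summary of the CRT factorization \eqref{split-sum}, the identification of the first bracket with $g^\star_{\delta,u,v}(c_2)$ and the middle bracket with $g(n,c_2)g(m,c_2)$ (using $(n,m)=1$, $n,m\equiv 1\bmod 4$), and the substitution $\beta\mapsto w\beta$, $w=\delta uvnm$, in the modulus-$q^{2\ell-j}$ sum, which extracts $\left(\frac{\delta uvnm}{q^r}\right)$ and leaves $S_r(\bar 2c_2,-\overline{c_1}q^r\delta;q^{2\ell-j})$. Your deferral of the residual reciprocity/2-adic bookkeeping to the computation behind \eqref{split-sum} matches the paper, which likewise records that factorization without further proof.
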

Since $r\leq 2\ell-j-2$ the character sum $S_r(\bar 2c_2,-\overline{c_1}q^r\delta;q^{2\ell-j})=0$ if $c_2=0$, or if $q^r\nmid c_2$.  
\begin{lemma}
We have
$$
E_{r,c_1}(0;u\delta n,v\delta m)=0.
$$
\end{lemma}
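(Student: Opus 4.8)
The plan is to deduce $E_{r,c_1}(0;u\delta n,v\delta m)=0$ directly from Lemma~\ref{char31} and the observation that immediately precedes this statement. First I would write down the factorisation
$$
E_{r,c_1}(0;u\delta n,v\delta m)=g_{\delta,u,v}^\star(0)\,g(n,0)\,g(m,0)\left(\frac{\delta uvnm}{q^r}\right)\left(\frac{\delta}{nm}\right)S_r\!\left(\bar 2\cdot 0,\;-\overline{c_1}q^r\delta;\;q^{2\ell-j}\right),
$$
obtained by specialising $c_2=0$ in Lemma~\ref{char31}. The key point is the last factor: we are in the regime $r\le 2\ell-j-2$ (as noted after Lemma~\ref{mid-lemma}, and restated just before the present statement), so the remark immediately preceding the lemma applies and gives $S_r(\bar 2c_2,-\overline{c_1}q^r\delta;q^{2\ell-j})=0$ whenever $c_2=0$. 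Hence the entire product vanishes, which is exactly the assertion.

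In slightly more detail, I would make the reason for $S_r(0,\ast;q^{2\ell-j})=0$ explicit, since it is the only substantive step. Recall $S_r(a,b;q^{2\ell-j})=\sum^\star_{\beta\bmod q^{2\ell-j}}\left(\frac{\beta}{q^r}\right)e\!\left(\frac{a\beta+b\bar\beta}{q^{2\ell-j}}\right)$ with $r\le 2\ell-j-2$ and $r$ even (recall $j=2[\theta\ell]$ and $r$ ranges over exponents of $q$ with the quadratic symbol $\left(\frac{\cdot}{q}\right)^r$, so the genuinely oscillatory case is $r$ odd; either way the argument is the same mutatis mutandis). With $a=0$ the sum becomes $\sum^\star_{\beta}\left(\frac{\beta}{q^r}\right)e\!\left(\frac{b\bar\beta}{q^{2\ell-j}}\right)$; substituting $\beta\mapsto\bar\beta$ this is $\sum^\star_{\beta}\left(\frac{\beta}{q^r}\right)e\!\left(\frac{b\beta}{q^{2\ell-j}}\right)$, a twisted additive character sum modulo $q^{2\ell-j}$ whose "frequency" $b=-\overline{c_1}q^r\delta$ has $q$-adic valuation exactly $r$ (since $q\nmid c_1$ and $q\nmid\delta$ by the constraints in \eqref{tr'}). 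Writing $\beta=\beta_0+q^{2\ell-j-1}t$ with $t$ running modulo $q$ and $\beta_0$ modulo $q^{2\ell-j-1}$, the inner sum over $t$ is $\sum_t\left(\frac{\beta_0+q^{2\ell-j-1}t}{q^r}\right)e\!\left(\frac{b t}{q}\right)$; because $r\le 2\ell-j-2<2\ell-j-1$, the quadratic symbol depends only on $\beta_0\bmod q^r$ and is constant in $t$, so this collapses to $\left(\frac{\beta_0}{q^r}\right)\sum_t e(bt/q)$, and the geometric sum over $t$ vanishes precisely because $q\mid b$... wait — one must be careful: we need $q\nmid b/q^{v}$ at the right level; the correct statement is that the $t$-sum is $q$ when $q^{2\ell-j-1}\mid b$ and $0$ otherwise, and here $v_q(b)=r\le 2\ell-j-2<2\ell-j-1$, so indeed it is $0$. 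This is exactly the content of the pre-stated remark, and it forces $S_r=0$.

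Given that, the proof is one line: the vanishing of $S_r(\bar 2\cdot 0,-\overline{c_1}q^r\delta;q^{2\ell-j})$ kills the product in Lemma~\ref{char31} at $c_2=0$, so $E_{r,c_1}(0;u\delta n,v\delta m)=0$. I do not expect any genuine obstacle here; the statement is a bookkeeping corollary of Lemma~\ref{char31} plus the standard evaluation of a twisted additive character sum modulo a prime power, and the only thing to watch is ensuring the hypotheses $r\le 2\ell-j-2$, $q\nmid c_1$, $q\nmid\delta$ are in force — all of which are guaranteed by the running assumptions and by the summation ranges in \eqref{tr'}. If anything is delicate it is merely the interaction of the Legendre-type symbol $\left(\frac{\beta}{q^r}\right)$ with the prime-power modulus when $r$ is odd, but since $r<2\ell-j-1$ the symbol factors through $\beta\bmod q^r$ and the argument above goes through verbatim.
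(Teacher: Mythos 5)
Your overall route is the same as the paper's: specialize Lemma~\ref{char31} at $c_2=0$ and reduce everything to the vanishing of $S_r(0,-\overline{c_1}q^r\delta;q^{2\ell-j})$, which the paper asserts (without proof) from $r\le 2\ell-j-2$. The gap is in your verification of that vanishing. After $\beta\mapsto\bar\beta$ you split $\beta=\beta_0+q^{2\ell-j-1}t$ and want the $t$-sum $\sum_{t\bmod q}e(bt/q)$ to vanish; but that sum equals $q$ whenever $q\mid b$ and vanishes only when $q\nmid b$. Since $b=-\overline{c_1}q^r\delta$ has $v_q(b)=r$, your one-step splitting proves vanishing only for $r=0$; for $r\ge 1$ (which does occur, as $r$ runs up to $2\ell-j-2$) the $t$-sum is $q$ and the argument stalls. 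Your attempted patch --- ``the $t$-sum is $q$ when $q^{2\ell-j-1}\mid b$ and $0$ otherwise'' --- is not a correct statement about $\sum_{t\bmod q}e(bt/q)$; it is the criterion for the full Ramanujan sum $\sideset{}{^\star}\sum_{\beta\bmod q^{2\ell-j}}e(b\beta/q^{2\ell-j})$, and even that applies only when the twist $\left(\frac{\cdot}{q^r}\right)=\left(\frac{\cdot}{q}\right)^{r}$ is trivial, i.e.\ $r$ even. (A minor slip: the symbol depends on $\beta\bmod q$, not on $\beta_0\bmod q^r$, though your conclusion that it is constant in $t$ is fine.)

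The repair is straightforward and uses $r\le 2\ell-j-2$ exactly where you expect. First pull $q^r$ out of the additive character: write $b=q^rb'$ with $q\nmid b'$ (here you need $q\nmid c_1\delta$, which holds by the ranges in \eqref{tr'}), so that
$$
S_r(0,b;q^{2\ell-j})=q^{r}\;\sideset{}{^\star}\sum_{\beta\bmod q^{2\ell-j-r}}\left(\frac{\beta}{q}\right)^{r}e\!\left(\frac{b'\beta}{q^{2\ell-j-r}}\right),
$$
and note the new modulus satisfies $2\ell-j-r\ge 2$. If $r$ is even this is a Ramanujan sum to a modulus $\ge q^2$ with frequency coprime to $q$, hence $0$; if $r$ is odd it is a Gauss-type sum whose character has conductor $q$ strictly below the modulus, and now your splitting $\beta=\beta_0+q^{2\ell-j-r-1}t$ is legitimate: the symbol is constant in $t$ and $\sum_{t\bmod q}e(b't/q)=0$ because $q\nmid b'$. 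So the lemma and your high-level deduction are correct, but the vanishing mechanism is this two-case argument after removing $q^r$, not the single top-level splitting you wrote (and the two parities are not quite ``the same mutatis mutandis'': only the odd case carries a genuine character twist).
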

 
\vspace{.5cm}

Substituting the explicit value of the character sum in Lemma \ref{poisson2}, and interchanging the order of summations, we get 
\begin{align}
\label{trr}
T_{r,C}^\star(\delta,c_1,u,v)=&\frac{q^{\frac{3}{2}r}}{2q^{2\ell-j}\sqrt{C}}\frac{1}{c_1\delta uv}\left(\frac{\delta}{q^r}\right)\sum_{c_2\in\mathbb Z-\{0\}}g_{\delta,u,v}^\star(c_2)S_r(\bar 2c_2,-\overline{c_1}\delta;q^{2\ell-j-r})
\\
\nonumber &\times \mathop{\sum\sum}_{\substack{n,m=1\\(n,m)=1\\(nm,\delta q)=1\\ n,m\equiv 1 \bmod 4}}^{\infty}b_{\delta,\bar\chi}(n,c_2)b_{\delta,\chi}(m,c_2)\frac{\tilde{\mathcal I}_{r,c_1}(q^rc_2;\delta un,\delta vm)}{nm}
\end{align}
where the new coefficients are given by
$$
b_{\delta,\psi}(n,c_2)=\psi(n)\left(\frac{q^rc_{11}\delta}{n}\right)g(n,c_2).
$$

\vspace{.5cm}

Next we will analyse the integral 
\begin{align}
\label{last-int-again}
\tilde{\mathcal I}_{r,c_1}(c_2;n,m)=\int G(z)I_{r}\left(2n,2m;\frac{Cz}{q^r}\right)
e\left(\frac{q^{2r}nm}{q^{2\ell-j}Cz}-\frac{c_2Cz}{2q^{2\ell-j+r}c_1[n,m]}\right)\frac{dz}{z^{\frac{3}{2}}}
\end{align} 
which appears in \eqref{trr} and is defined in Lemma~\ref{poisson2}. We will obtain bounds for this integral, which in particular will also give us the effective range for the $c_2$ sum in \eqref{trr}. Also we need to separate the variables $n$ and $m$ to pave the way for an application of the large sieve. Recall that we are taking $n,m>0$. \\

Let us temporarily write $\delta n$ and $\delta m$ in place of $n$ and $m$ respectively, and put the restriction $(n,m)=1$. We replace the integral representation \eqref{int-again} to obtain 
\begin{align*}
\tilde{\mathcal I}_{r,c_1}&(c_2;\delta n,\delta m)=\iiint h\left(x\right)h\left(y\right)G(z) W_k\left(\frac{2xyN^2}{q^jCz}\right)\\
&\times e\left(\frac{4xyN^2+q^{2r+2j-2\ell}\delta^2nm-2\delta(nx+my)Nq^{r+j-\ell}}{q^jCz}-\frac{c_2Cz}{2q^{2\ell-j+r}c_1\delta nm}\right)\frac{dz}{z^{\frac{3}{2}}}dxdy.
\end{align*}
Using repeated integration by parts in the $z$-integral, and using the bounds for $n$ and $m$ from Section~\ref{sec-char}, it follows that the integral is negligibly small unless
$$
|c_2|\ll \frac{c_1q^{4\ell-2j+\ell\varepsilon}}{\delta q^r}\left(B^2+B^{-2}\right).
$$
This gives the effective range for the $c_2$ sum in \eqref{trr}. This is good enough for our purpose for $B<q^{\ell\varepsilon}$. However for larger $B$ we will obtain a better range below.\\

To get a partial separation of the variables $n$ and $m$, we define new variables $x'=x/m$, $y'=y/n$ and $z'=z/nm$. With this change of variables $\tilde{\mathcal I}_{r,c_1}(c_2;\delta n,\delta m)$ reduces to
\begin{align*}
\sqrt{nm}\iiint h\left(mx'\right)h\left(ny'\right)G(nmz')W_k\left(\frac{2x'y'N^2}{q^jCz'}\right)e\left(\frac{4B\Delta(x',y')}{z'}-\frac{c_2Cz'}{2q^{2\ell-j+r}c_1\delta}\right)\frac{dz'}{z'^{\frac{3}{2}}}dx'dy',
\end{align*}
where 
$$
\Delta(x',y')=\left(x'-\frac{\delta q^{r+j-\ell}}{2N}\right)\left(y'-\frac{\delta q^{r+j-\ell}}{2N}\right).
$$ 
Now suppose $B>q^{\ell\varepsilon}$, so that by Lemma \ref{int-000} we have $|\delta m|\asymp N/q^{r+j-\ell}$ (upto a factor of size $q^{\ell\varepsilon}$). Then for a given $x'$, by repeated integration by parts in the $y'$ integral we obtain that $\tilde{\mathcal I}_{r,c_1}(c_2;\delta n,\delta m)$ is negligibly small unless 
$$
\left|x'-\frac{\delta q^{r+j-\ell}}{2N}\right|\ll \frac{\delta q^{r+j-\ell+\ell\varepsilon}}{NB}. 
$$
So we get that upto a negligibly error the integral $\tilde{\mathcal I}_{r,c_1}(c_2;\delta n,\delta m)$ is given by
\begin{align*}
\sqrt{nm}\mathop{\iint}_{\left|x'-\frac{\delta q^{r+j-\ell}}{2N}\right|\ll \frac{\delta q^{r+j-\ell}}{NB}q^{\ell\varepsilon}} &h\left(mx'\right)h\left(ny'\right)\\
&\times \int G(nmz')W_k\left(\tfrac{2x'y'B}{z'}\right)e\left(\tfrac{4B\Delta(x',y')}{z'}-\tfrac{c_2Cz'}{2q^{2\ell-j+r}c_1\delta}\right)\frac{dz'}{z'^{\frac{3}{2}}}dx'dy'.
\end{align*} 
Now we can get a refined effective range for $c_2$ by integrating by parts the inner integral. Notice that the restriction on $x'$ implies that $B\Delta(x',y')/z' \ll q^{\ell\varepsilon}$. So we get that the inner integral is negligibly small unless 
$$
|c_2|\ll \frac{c_1q^{4\ell-2j+\ell\varepsilon}}{\delta q^r}B.
$$
Observe that compared to the previous bound we have saved an extra $B$.   \\

We summarize our findings in the following lemma. Here $\tilde G(s)$ and $\tilde h(s)$ denote the Mellin transform of the smooth compactly supported functions $G$ and $h$ respectively. 
\begin{lemma}
\label{all-about-int}
The integral $\tilde{\mathcal I}_{r,c_1}(q^rc_2;\delta n,\delta m)$ is negligibly small unless 
$$
|n|, |m| \ll \frac{N}{\delta q^{j+r-\ell}}\left(B^{-1}+1\right)q^{\ell\varepsilon},\;\;\;\text{and}\;\;\;|c_2|\ll \frac{c_1q^{4\ell-2j+\ell\varepsilon}}{\delta q^{2r}}(B+B^{-2}).
$$
Moreover upto a negligible error term we have
\begin{align}
\label{one-more}
\tilde{\mathcal I}_{r,c_1}(q^rc_2;\delta n,\delta m)= \frac{1}{(2\pi i)^3}\mathop{\iiint}_{(\sigma_1),(\sigma_2),(\sigma_3)}\tilde h(s_1)\tilde h(s_2)\tilde G(s_3)\mathcal F_{r,c_1}(c_2;\delta,B;\mathbf{s})\frac{d\mathbf{s}}{m^{s_1+s_3-\frac{1}{2}}n^{s_2+s_3-\frac{1}{2}}},
\end{align} 
where
\begin{align*}
\mathcal F_{r,c_1}(c_2;\delta,B;\mathbf{s})=\mathop{\iiint}_{\mathcal R(B)} x^{-s_1}y^{-s_2}z^{-s_3}W_k\left(\tfrac{2xyB}{z}\right)e\left(\tfrac{4B\Delta(x,y)}{z}-\tfrac{c_2Cz}{2q^{2\ell-j}c_1\delta}\right)\frac{dxdydz}{z^{\frac{3}{2}}}.
\end{align*} 
For $B<q^{\ell\varepsilon}$ we take the region $\mathcal R(B)=[Q^{-1},Q]^3\cap \{2^{-1}\leq xy/z \leq 4\}$, and for $B\geq q^{\ell\varepsilon}$ the region is obtained by putting the further restriction that 
$$
\left|x-\frac{\delta q^{r+j-\ell}}{2N}\right|\ll \frac{\delta q^{r+j-\ell}}{NB}q^{\ell\varepsilon}.
$$
\end{lemma}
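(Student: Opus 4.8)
The plan is to assemble the estimates already obtained in the discussion preceding the statement. First I would substitute the integral representation \eqref{int-again} of $I_r$ into the definition \eqref{last-int-again} of $\tilde{\mathcal I}_{r,c_1}$, writing $\delta n$ and $\delta m$ in place of the arguments and imposing $(n,m)=1$, so that $\tilde{\mathcal I}_{r,c_1}(q^r c_2;\delta n,\delta m)$ becomes a triple oscillatory integral in $x,y,z$ whose total phase is the sum of $4xyN^2$, $q^{2r+2j-2\ell}\delta^2 nm$ and $-2\delta(nx+my)Nq^{r+j-\ell}$, each divided by $q^jCz$, together with $-c_2Cz/(2q^{2\ell-j}c_1\delta nm)$. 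The support in $n$ and $m$ is then immediate from Section~\ref{sec-char}: the integration-by-parts bounds leading to \eqref{nm-bound} (and Lemma~\ref{int-000} when $B\geq q^{\ell\varepsilon}$), applied with $2\delta n,2\delta m$ in place of $n,m$, already force $|\delta n|,|\delta m|\ll Nq^{\ell\varepsilon}(B^{-1}+1)/q^{j+r-\ell}$, which is the claimed range after dividing by $\delta$. For the coarse bound on $c_2$ I would carry out repeated integration by parts in the $z$-variable, tracking the sizes of the four phase terms on the already restricted support together with the derivatives of $W_k$ via \eqref{bessel-bd0}; this yields negligibility unless $|c_2|\ll c_1q^{4\ell-2j+\ell\varepsilon}(B^2+B^{-2})/(\delta q^{2r})$.

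To sharpen the $c_2$ range when $B\geq q^{\ell\varepsilon}$, I would first make the change of variables $x'=x/m$, $y'=y/n$, $z'=z/(nm)$, which partially separates $n$ and $m$, produces the prefactor $\sqrt{nm}$, and converts the phase into $4B\Delta(x',y')/z'-c_2Cz'/(2q^{2\ell-j}c_1\delta)$ with $\Delta$ as in the text. Since Lemma~\ref{int-000} gives $|\delta m|\asymp N/q^{r+j-\ell}$ here, repeated integration by parts in the $y'$-integral localizes $x'$ to the short interval $|x'-\delta q^{r+j-\ell}/(2N)|\ll \delta q^{r+j-\ell+\ell\varepsilon}/(NB)$; on this interval $B\Delta(x',y')/z'\ll q^{\ell\varepsilon}$, so a further integration by parts in $z'$, where the oscillation of the phase now comes essentially only from the $c_2$-term, improves the range to $|c_2|\ll c_1q^{4\ell-2j+\ell\varepsilon}B/(\delta q^{2r})$. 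Combining the two regimes gives the stated bound with the factor $B+B^{-2}$.

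For the Mellin representation \eqref{one-more} I would insert, in the post-change-of-variables integral, the Mellin inversions $h(mx')=\tfrac{1}{2\pi i}\int_{(\sigma_1)}\tilde h(s_1)(mx')^{-s_1}\,ds_1$, $h(ny')=\tfrac{1}{2\pi i}\int_{(\sigma_2)}\tilde h(s_2)(ny')^{-s_2}\,ds_2$ and $G(nmz')=\tfrac{1}{2\pi i}\int_{(\sigma_3)}\tilde G(s_3)(nmz')^{-s_3}\,ds_3$, which are legitimate because $h$ and $G$ are smooth and compactly supported, so $\tilde h$ and $\tilde G$ decay faster than any polynomial on vertical lines and the interchange of integrals is justified. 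Pulling the powers of $m$ and $n$ outside and combining them with the Jacobian factor $\sqrt{nm}$ produces the exponents $s_1+s_3-\tfrac12$ and $s_2+s_3-\tfrac12$, while the residual integral over $x',y',z'$, restricted to $\mathcal R(B)$ (the extra constraint on $x'$ for $B\geq q^{\ell\varepsilon}$ being precisely the one derived in the previous step), is exactly $\mathcal F_{r,c_1}(c_2;\delta,B;\mathbf{s})$.

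I expect the main obstacle to be the integration-by-parts analysis of the genuinely three-dimensional oscillatory integral: one has to verify that on the relevant support the total phase possesses a derivative (in $z$, and in $y'$ after the change of variables) that is simultaneously large and essentially monotone, which requires carefully balancing the four phase contributions against one another and against the size of $W_k^{(j)}$ coming from \eqref{bessel-bd0}. Particular care is needed near the transition range $B\asymp 1$, where the $c_2$-bounds degenerate but remain valid because $B+B^{-2}\gg 1$ there, and in checking that the cumulative $q^{\ell\varepsilon}$-losses from the repeated integrations by parts stay under control.
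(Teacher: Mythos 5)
Your proposal follows essentially the same route as the paper: the ranges for $n,m$ are imported from \eqref{nm-bound} and Lemma~\ref{int-000}, the coarse $c_2$-range comes from integration by parts in $z$, the refinement for $B\geq q^{\ell\varepsilon}$ from the change of variables $x'=x/m$, $y'=y/n$, $z'=z/(nm)$ and the localization of $x'$ followed by integration by parts in $z'$, and the representation \eqref{one-more} from Mellin inversion of $h$, $h$, $G$ with the Jacobian factor $\sqrt{nm}$ producing the exponents $s_1+s_3-\tfrac12$, $s_2+s_3-\tfrac12$. This is correct and is exactly the argument the paper sketches before the lemma (with the $q^{2r}$ versus $q^{r}$ in the $c_2$-bound accounted for by the rescaling $c_2\mapsto q^rc_2$).
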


In the integral \eqref{one-more} we will take the location of the contours to be $\sigma_1=\sigma_2=1$ and $\sigma_3=-\frac{1}{2}+\varepsilon$. For this choice we have
\begin{align}
\label{bound-for-F}
\mathcal F_{r,c_1}(c_2;\delta,B;\mathbf{s})\ll \min\{B^{-\frac{1}{2}},B^{k-1}\}\min\{1,B^{-1}\}q^{\ell\varepsilon}\ll \min\{B^{-\frac{3}{2}},B^{k-1}\}q^{\ell\varepsilon}.
\end{align}
This is obtained by trivially estimating the integrals over $x$, $y$ and $z$, taking into account the size of $W_k$ (see \eqref{bessel-bd0}) and the localization of $x$ for $B>q^{\ell\varepsilon}$. Observe that the integral over $s_1$, $s_2$ and $s_3$ converges absolutely due to the rapid decay of the Mellin transforms as $|\text{Im}(s_i)|\rightarrow\infty$.


\section{Separation of variable and large sieve}
\label{sec-large}

Using the integral representation Lemma \ref{all-about-int}, we will now analyse the contribution of the positive frequencies, i.e. $c_2>0$, to $T_{r,C}^\star(\delta,c_1,u,v)$ in \eqref{trr}. To this end we need to get bounds for 
\begin{align}
\label{sumsum}
\frac{q^{\frac{3}{2}r}\sqrt{B}}{Nq^{2\ell-\frac{3}{2}j}c_1\delta uv}&\mathop{\iiint}_{(\mathbf{\sigma})}\frac{\tilde h(s_1)\tilde h(s_2)\tilde G(s_3)}{u^{\frac{1}{2}+s_2+s_3}v^{\frac{1}{2}+s_1+s_3}}\mathcal B(\mathbf{s})d\mathbf{s},
\end{align}
where 
\begin{align}
\label{bsum}
\mathcal B(\mathbf{s})=\sum_{c_2=1}^{\infty}& g_{\delta,u,v}^\star(c_2) S_r(c_2)\mathcal F_{r,c_1}(c_2;\delta,B;\mathbf{s})\mathop{\sum\sum}_{\substack{n,m=1\\(n,m)=1\\(nm,2\delta q)=1}}^{\infty}\frac{b_{\delta,\bar\chi}(n,c_2)}{n^{\frac{1}{2}+s_2+s_3}}\frac{b_{\delta,\chi}(m,c_2)}{m^{\frac{1}{2}+s_1+s_3}}.
\end{align}
Here we are using the shorthand notation $S_r(c_2)=S_r(\bar 2c_2,-\overline{c_1}\delta;q^{2\ell-j-r})$, for which we will use the Weil bound $|S_r(c_2)|\ll q^{\ell-\frac{1}{2}(j+r)}$. To ensure absolute convergence in the inner sums, a priori we put the restrictions that $\sigma_1+\sigma_3>1$ and $\sigma_2+\sigma_3>1$.  Also notice that we have extended the sums over $n$ and $m$ to all odd integers, for this manoeuvre we need to introduce an extra character modulo $4$, which we are going to ignore. Also we need to replace the Gauss sum $g(n,c_2)$ (also $g(m,c_2)$), which appears in the coefficients $b_{\delta,\psi}(n,c_2)$, by the multiplicative function 
$$
G_{c_2}(n)=\left(\frac{1-i}{2}+\left(\frac{-1}{n}\right)\frac{1+i}{2}\right)g(n,c_2).
$$
To separate the sums over $n$ and $m$ in \eqref{bsum}, we use Mobius inversion to get 
\begin{align*}
\mathcal B(\mathbf{s})=\sum_{\substack{\theta=1\\(\theta,2\delta q)=1}}^{\infty}\mu(\theta)&\sum_{c_2=1}^\infty g_{\delta,u,v}^\star(c_2) S_r(c_2)\mathcal F_{r,c_1}(c_2;\delta,B;\mathbf{s})\mathop{\sum\sum}_{\substack{n,m=1\\(nm,2\delta q)=1}}^{\infty}\frac{b_{\delta,\bar\chi}(\theta n,c_2)}{(\theta n)^{\frac{1}{2}+s_2+s_3}}\frac{b_{\delta,\chi}(\theta m,c_2)}{(\theta m)^{\frac{1}{2}+s_1+s_3}}.\\
\end{align*}

Now we consider the $L$-series given by
$$
L_{\theta,\delta,\chi}(s;c_2)=\mathop{\sum}_{\substack{n=1\\(n,2\delta q)=1}}^{\infty}\frac{b_{\delta,\chi}(\theta n,c_2)}{(\theta n)^{\frac{1}{2}+s}}=\mathop{\sum}_{\substack{n=1\\(n,2\delta q)=1}}^{\infty}\frac{\chi(\theta n)G_{c_2}(\theta n)}{(\theta n)^{\frac{1}{2}+s}}\left(\frac{n\theta}{q^rc_{11}\delta}\right).
$$
The multiplicativity of the coefficients yield an Euler product for the series. We write $c_1=c_{11}c_{12}^2$, $c_2=c_{21}c_{22}^2$ and $\delta=\delta_1\delta_2^2$ with $c_{11}$, $c_{21}$ and $\delta_1$ square-free. Then it follows that we have a factorization
$$
L_{\theta,\delta,\chi}(s;c_2)=L\left(s,\chi\left(\frac{q^r\delta_1c_{11}c_{21}}{.}\right)\right)\tilde L_{\theta,\delta,\chi}(s;c_2),
$$
where the $L$-series $\tilde L_{\theta,\delta,\chi}(s;c_2)$ converges absolutely in the region $\sigma>\frac{1}{2}+\varepsilon$. Moreover in this domain we have
$$
\tilde L_{\theta,\delta,\chi}(s;c_2)\ll_{\varepsilon}\frac{(q^{\ell}\delta c_1c_2)^{\varepsilon}}{\theta^{\frac{1}{2}+\varepsilon}}.
$$\\

Using the $L$-series we can write
\begin{align*}
\mathcal B(\mathbf{s})=\sum_{\substack{\theta=1\\(\theta,2\delta q)=1}}^{\infty}\mu(\theta)&\sum_{c_2=1}^\infty g_{\delta,u,v}^\star(c_2) S_r(c_2)\mathcal F_{r,c_1}(c_2;\delta,B;\mathbf{s})L_{\theta,\delta,\bar\chi}(s_2+s_3;c_2)L_{\theta,\delta,\chi}(s_1+s_3;c_2).
\end{align*}
We move the contours to $\sigma_1=\sigma_2=1$ and $\sigma_3=-\frac{1}{2}+\varepsilon$. Then applying Cauchy on the sum over $c_2$,  we are led to consider 
\begin{align}
\label{tc}
\frac{q^{\frac{3}{2}r}\sqrt{B}}{Nq^{2\ell-\frac{3}{2}j}c_1\delta (uv)^2}&\mathop{\iiint}_{(\mathbf{\sigma})}|\tilde h(s_1)\tilde h(s_2)\tilde G(s_3)|\mathcal B^\star_X(\mathbf{s})|d\mathbf{s}|,
\end{align}
where $\sigma$ is as above, and 
\begin{align*}
\mathcal B^\star_{X}(\mathbf{s})=\sum_{\theta=1}^{\infty}\frac{1}{\theta^{1+\varepsilon}}\sum_{c_{22}\ll Q}\sum_{c_{21}\sim X/c_{22}^2}\left|g_{\delta,u,v}^\star(c_2) S_r(c_2)\mathcal F_{r,c_1}(c_2;\delta,B;\mathbf{s})\right|\left|L\left(s_1+s_3,\chi\left(\tfrac{q^r\delta_1c_{11}c_{21}}{.}\right)\right)\right|^2.
\end{align*}
Using approximate functional equation we can express the Dirichlet $L$-function as a rapidly converging series with effective length given by the square-root of the analytic conductor. The analytic conductor is given by $[q^\ell c_{11},\frac{c_{21}\delta_1}{(c_{21},\delta_1)^2}]\left(3+|t_1+t_3|\right)$. Using the main result of \cite{HB} we get
\begin{align*}
\sum_{c_{22}\ll Q}(\delta_1\delta_2,c_{22})\sum_{\substack{c_{21}\sim X/c_{22}^2\\q\nmid c_{21}}}(\delta_1,c_{21})\left|L\left(s_1+s_3,\chi\left(\tfrac{q^r\delta_1 c_{11}c_{21}}{.}\right)\right)\right|^2\ll (q^{\ell}T_{13})^{\varepsilon}T_{13}\left(X+\sqrt{q^\ell c_{11}\delta_1X}\right),
\end{align*}
where $T_{13}=\left(3+|t_1+t_3|\right)$.(Recall that $c_1=c_{11}c_{12}^2$, $c_2=c_{21}c_{22}^2$ and $\delta=\delta_1\delta_2^2$ with $c_{11}$, $c_{21}$ and $\delta_1$ square-free.) \\

Substituting this bound in \eqref{tc}, using the bounds from Lemma \ref{bound-for-g}, $|S_r(c_2)|\ll q^{\ell-\frac{1}{2}(j+r)}$ (which is the Weil bound for Kloosterman sums) and \eqref{bound-for-F}, we obtain 
\begin{align}
\label{tc2}
\mathcal B_X^\star(\mathbf{s})\ll uv\delta_2 q^{\ell-\frac{1}{2}(j+r)}\min\{B^{-\frac{3}{2}},B^{k-1}\}T_{13}\left(X+\sqrt{q^\ell c_{11}\delta_1 X}\right)(q^{\ell}T_{13})^{\varepsilon}.
\end{align}
According to Lemma \ref{all-about-int} in the worst case scenario $X=\frac{c_1q^{4\ell-2j+\ell\varepsilon}}{\delta q^{2r}}(B+B^{-2})$. Using \eqref{tc}, it follows that the contribution of the positive frequencies $c_2>0$ to \eqref{trr} is dominated by
$$
\frac{q^{j-\ell+r}}{Nc_1\delta_1\delta_2 uv}\min\{B^{-1},B^{k-\frac{1}{2}}\}\max\{B,B^{-2}\}\left(\frac{c_1q^{4\ell-2j}}{\delta q^{2r}}+\sqrt{\frac{c_{11}c_1q^{5\ell-2j}}{\delta_2^2q^{2r}}}\right)q^{\ell\varepsilon}.
$$
Now we observe that 
$$
\min\{B^{-1},B^{k-\frac{1}{2}}\}\max\{B,B^{-2}\}\ll 1
$$ 
(as $k>2$). Bounding the contribution of the negative frequencies $c_2<0$ in exactly the same manner we obtain
\begin{align}
\label{trr-final-bd}
T_{r,C}^\star(\delta,c_1,u,v)\ll \frac{q^{j-\ell+r}}{Nc_1\delta_1\delta_2 uv}\left(\frac{c_1q^{4\ell-2j}}{\delta q^{2r}}+\sqrt{\frac{c_{11}c_1q^{5\ell-2j}}{\delta_2^2q^{2r}}}\right)q^{\ell\varepsilon}.
\end{align}
Substituting in \eqref{tr'} it follows that
$$
\mathcal T_{r,C}^\star\ll Nq^{-\frac{1}{2}r+\ell\varepsilon}\sum_{\delta\leq Q}\frac{1}{\delta_1\delta_2}
\sum_{\substack{c_1\leq Q\\c_1|(2\delta)^{\infty}}}|\tilde C(\delta,c_1)|\left(\frac{q^{2\ell-\frac{3}{2}j}}{\delta_1\delta_2^2}+\frac{q^{\frac{1}{2}(\ell-j)}}{\delta_2c_{12}}\right).
$$
Now we use Lemma \ref{heck-trick} to execute the remaining sums on $\delta$ and $c_1$. Summing over all $r$, we see that the optimal choice of $j$ is given by $j=2[\theta \ell]$ with $\theta =2/3$.



\end{document}